\newtheorem{theorem}{Theorem}[section]
\newtheorem{lemma}[theorem]{Lemma}
\newtheorem{corollary}[theorem]{Corollary}
\theoremstyle{definition}
\newcommand{\GG}{\Gamma}
\newcommand{\kk}{\kappa}
\newcommand{\CC}{\mathbb{C}}
\newcommand{\ZZ}{\mathbb{Z}}
\newcommand{\FF}{\mathbb{F}}
\newcommand{\PP}{\mathbb{P}}
\newcommand{\calF}{\mathcal F}
\newcommand\calZ{\mathcal Z}
\newcommand\Del{\Delta}
\newcommand{\nc}{\newcommand}
\nc{\on}{\operatorname}
\nc{\ol}{\overline}
\nc{\Bun}{\on{Bun}}
\nc{\mc}{\mathcal}
\newcommand{\OO}{{\mathbb{O}}}
\newcommand{\calL}{\mathcal L}
\newcommand{\GL}{\operatorname{GL}}
\newcommand{\Pic}{\operatorname{Pic}}
\renewcommand{\Bun}{\operatorname{Bun}}
\newcommand{\calO}{\mathcal O}
\newcommand{\Lam}{\Lambda}
\newcommand{\lam}{\lambda}
\newcommand{\calD}{\mathcal D}
\newcommand{\PGL}{\operatorname{PGL}}
\newcommand{\kap}{\kappa}
\newcommand{\calE}{\mathcal E}
\newcommand{\Aut}{\operatorname{Aut}}
\newcommand{\qlb}{\overline{\mathbb Q}_{\ell}}
\newcommand{\Fr}{\operatorname{Fr}}
\newcommand{\Ome}{\Omega}
\newcommand{\calH}{\mathcal H}
\newcommand\gam{\gamma}
\newcommand{\Spec}{\operatorname{Spec}}
\newcommand{\SL}{\operatorname{SL}}
\newcommand{\bpo}{\Bun_G(\PP^1)_{0,\infty}}
\renewcommand\kk{\mathbf k}
\newcommand\KK{\mathbb K}
\newcommand\gr{\operatorname{gr}}
\newcommand\okk{\overline\kk}
\newcommand\x{\times}
\newcommand\End{\operatorname{End}}
\newcommand\cusp{\operatorname{cusp}}
\newcommand\Irr{\operatorname{Irr}}
\newcommand\Gal{\operatorname{Gal}}
\newcommand\act{\mathbf{act}}
\newcommand\calC{\mathcal C}
\newcommand\Lift{\operatorname{Lift}}
\newcommand\calR{\mathcal R}
\newcommand\Gam{\Gamma}
\newcommand\ind{\operatorname{ind}}
\newcommand\Norm{\operatorname{Norm}}
\begin{document}
{\color{red}}
\title[Hecke algebras and bundle on $\PP^1$]{Hecke algebras for the 1st congruence subgroup and bundles on $\PP^1$ I: the case of finite field}
\author{Alexander Braverman and David Kazhdan}
\begin{abstract}
Let  $G$ be  a split reductive group over a finite field $\kk$
In this note
we study the space $V$ of finitely supported functions on the set $\bpo(\kk)$ of isomorphism classes $G$-bundles on the projective line $\PP^1$ endowed with a trivialization at $0$ and $\infty$. We show that $V$ is naturally isomorphic to the regular bimodule over the Hecke algebra $A$ of the group $G(\kk((t)))$ with respect to the first congruence subgroup. As a byproduct we show that Hecke operators at points different from $0$ and $\infty$ to generate   the "stable center" of $A$. We provide an expression of the character of the lifting of an irreducible cuspidal representation of $\GL(N,\kk)$ to $\GL(N,\kk')$ where $\kk'$ is a finite extension of $\kk$ in terms of these generators.

In a subsequent publication we plan to develop analogous constructions in the case when $\kk$ is replaced by a local non-archimedian field.
\end{abstract}
\maketitle
\section{Introduction and setup}

\subsection{}\label{setup} Let $\kk$ be the finite field $\FF_q$ with a fixed algebraic closure $\okk$.
For every $i\geq 1$ let $\kk_i$ denote the degree $i$ extension of $\kk$ inside $\okk$; we have the norm
map $\Norm_{i,j}:\kk_i^{\x}\to \kk_j^{\x}$ for every $i\geq j$. In particular the groups $\kk_i^{\x}$ form a projective system.
Let also $\KK=\kk((t)), \OO=\kk[[t]]$.

Let $G$ be a split connected reductive group over $\kk$.

Denote by $K_1$ the first congruence subgroup of $G(\OO)$ (consisting of elements equal to 1 modulo $t$).
We denote by $A$ the Hecke algebra of $G(\KK)$ with respect to $K_1$ with coefficients in an algebraically closed field $F$ of characteristic $0$; this is an associative algebra containing the group algebra of $G(\kk)$ as a subalgebra.
We denote by
 $G^{\vee}$ the Langlands dual group of $G$; unless otherwise stated we shall think about it as a group over $F$.

Let $\calZ(A)$ denote the center of $A$. It follows from \cite{GeLa} that $\calZ(A)$ contains the algebra $F[Z(G^{\vee},\kk)]^{G^{\vee}}$ of $G^{\vee}$-invariant functions on the scheme $Z_{G^{\vee},\kk}$ consisting of pairs $(s,u)$
where

1) $s\in G^{\vee}$

2) $u: \underset{\leftarrow}\lim \ \kk_i^{\x}\to G^{\vee}$

\noindent
subject to the relation $sus^{-1}=u^q$ (it is easy to see that $Z_{G^{\vee},\kk}$ is indeed a scheme of finite type over $F$; this follows from the fact that there exists some  $i$  depending only on $G$ such that for any pair $(s,u)$ as above the homomorphism $u$ factorizes through $\kk_i^{\x}$).

One of the purposes of this note is to give an alternative characterization of the homomorphism $F[Z(G^{\vee},\kk)]^{G^{\vee}}\to \calZ(A)$ (in the future we shall call the image of this homomorphism {\em the stable center of $A$} and we shall denote it by $\calZ_{st}(A)$).
For this we are going to use some global constructions discussed in the next subsection.
%-----------------------------------------------------------------------------------------------------------------------
\subsection{Bundles on $\PP^1$ and $\calZ(A)$}\label{bundles}
Let $\bpo$ denote the moduli stack of bundles on $\PP^1$ (over $\kk$) with trivializations at $0$ and $\infty$.
Let $V$ denote the space of $F$-valued functions on $\bpo(\kk)$ with finite support.
The space $V$ has two natural commuting actions of the algebra $A$ (by Hecke operators at $0$ and $\infty$).
Our first statement is that $V$ is essentially the regular bimodule over $A$. More precisely, we claim that there exists an anti-involution
$\iota:A\to A$ on $A$ such that if we use it
to turn $V$ into an $A$-bimodule -- keeping the (left) action of $A$ at $0$ intact and making the action of $A$ at $\infty$ into a right action by using $\iota$, then we have
\begin{theorem}\label{loc-glob}
There is a natural isomorphism $V\simeq A$ of $A$-bimodules.
\end{theorem}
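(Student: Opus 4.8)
The plan is to write down an explicit map and check it is a bimodule isomorphism. Recall that $\Bun_G(\PP^1)_{0,\infty}(\kk)$ parametrizes a $G$-bundle $\mcP$ on $\PP^1$ together with trivializations $\tau_0,\tau_\infty$ of $\mcP$ at the formal discs around $0$ and $\infty$. Restricting $\mcP$ to $\Spec\KK$ at, say, $0$ and using $\tau_\infty$ to trivialize there, gluing data identifies the set $\Bun_G(\PP^1)_{0,\infty}(\kk)$ with the double coset space $K_1\bsl G(\KK)/K_1$ — this is the Beauville--Laszlo / uniformization statement for $\PP^1$, where the two punctures $0,\infty$ account for the two copies of $K_1$. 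Concretely, an element $g\in G(\KK)$ produces the bundle obtained by gluing the trivial bundle on $\PP^1\sm\{0\}$ to the trivial bundle on the disc at $0$ via $g$; two elements give isomorphic data exactly when they differ by $K_1$ on the left (change of $\tau_0$) and by $K_1$ on the right (automorphisms of the bundle trivial at $\infty$, which because $\PP^1\sm\{\infty\}=\AA^1$ has only constant... more precisely only $K_1$-type ambiguity once the trivializations are fixed). Thus $V=F[K_1\bsl G(\KK)/K_1]$ with finite support, which is by definition the underlying space of the Hecke algebra $A$.

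Next I would match the two $A$-actions with the left and right regular actions. The Hecke operator at $0$ acts on a bundle-with-trivializations by modifying $\mcP$ near $0$, i.e. by left convolution on $G(\KK)$; under the identification above this is precisely left multiplication in $A$. Symmetrically, the Hecke operator at $\infty$ modifies $\mcP$ near $\infty$. The subtlety is that, having chosen to uniformize using the disc at $0$, the action at $\infty$ naturally becomes a \emph{right} action but composed with the automorphism of $G(\KK)$ coming from the coordinate change $t\mapsto t^{-1}$ that swaps $0$ and $\infty$; this coordinate inversion, together with the inversion $g\mapsto g^{-1}$ needed to pass from a left to a right action, is exactly the source of the anti-involution $\iota$ in the statement. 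So I would \emph{define} $\iota$ to be this composite (inversion on $G(\KK)$ followed by the $t\mapsto t^{-1}$ automorphism, then transported to $A$), verify it is an anti-automorphism of $A$ preserving $K_1$, and then the action at $\infty$ becomes literally the right regular action of $A$ on itself.

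The remaining work is bookkeeping: check that the support-finiteness conditions on both sides correspond (a bundle on $\PP^1$ has bounded-below Harder--Narasimhan type, matching the fact that elements of $A$ are finitely supported on double cosets), and that the isomorphism is $\kk$-rational, i.e. survives taking $\Fr$-fixed points — here one uses that $\PP^1$ and $G$ are defined over $\kk$ and that $H^1(\Gal,\cdot)$ vanishings for the relevant groups make the groupoid cardinality count reduce to a genuine set bijection (or one argues directly with $\kk$-points throughout, avoiding stacky subtleties since the trivializations at $0,\infty$ rigidify the moduli problem). I expect the main obstacle to be precisely the careful identification of \emph{which} anti-involution appears — making sure that the geometric symmetry exchanging the two marked points, when unwound through the choice of uniformization point, yields an honest anti-involution of $A$ (rather than merely of the coset space) and that it is independent of choices; once $\iota$ is correctly pinned down, the bimodule isomorphism is essentially a tautology of the Beauville--Laszlo uniformization.
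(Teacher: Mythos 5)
There is a genuine and fatal gap at the very first step: the asserted uniformization $\bpo(\kk)\simeq K_1\backslash G(\KK)/K_1$ is false, and the theorem is precisely \emph{not} a tautology of Beauville--Laszlo gluing. If you uniformize at $0$, a bundle with trivializations of the fibers at $0$ and $\infty$ is given by a gluing datum $g\in G(\KK)$ modulo $K_1$ on one side (change of disc-trivialization fixing the fiber at $0$) and, on the other side, modulo the subgroup $N=\ker\bigl(G(\kk[t^{-1}])\to G(\kk)\bigr)$ of polynomial automorphisms of the trivial bundle on $\PP^1\sm\{0\}$ fixing the fiber at $\infty$. Thus $\bpo(\kk)\simeq N\backslash G(\KK)/K_1$, and $N$ is a polynomial group which is in no natural sense ``$K_1$-type'': it does not sit inside $G(\OO)$, and the coordinate change $t\mapsto t^{-1}$ does not carry $G(\OO)$ into itself, so the recipe you give for $\iota$ (compose inversion with $t\mapsto t^{-1}$ ``transported to $A$'') is not even a well-defined map on $K_1$-biinvariant functions. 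The paper in fact emphasizes that $\iota$ admits no simple description and is only \emph{defined} after $\act$ is already known to be an isomorphism.

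What the paper actually does is therefore essential, not bookkeeping. It builds the map $\act\colon A\to V$, $a\mapsto a\underset{0}\star\delta$, filters both sides by dominant coweights $\lambda\in\Lambda^+$, and computes the associated graded pieces: $V_\lambda\simeq F\bigl((G/U_\lambda\underset{M_\lambda}\times G/U_\lambda)(\kk)\bigr)$ (Harder--Narasimhan at $0$ and $\infty$, Theorem~\ref{global}) while $A_\lambda\simeq F\bigl((G/U_\lambda\underset{M_\lambda}\times G/U_\lambda^-)(\kk)\bigr)$ (Jantzen filtrations, Theorem~\ref{local}). These are genuinely different $G(\kk)\times G(\kk)$-sets --- one has $U_\lambda$ twice, the other $U_\lambda$ and $U_\lambda^-$ --- and the graded map $\act_\lambda$ is the standard intertwining operator (Radon transform) between them (Theorem~\ref{main-radon}). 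That this Radon transform $F(G/U_P(\kk))\to F(G/U_Q(\kk))$ is an isomorphism for associate parabolics over a finite field is a nontrivial theorem of Dipper--Du and Howlett--Lehrer, and it is the crux of the argument; your proposal never reaches this point because the false uniformization makes the two sides look identical from the start.
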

\begin{corollary}\label{loc-glob-cent}
We have $\calZ(A)=\End_{A\otimes A}(V)$.
\end{corollary}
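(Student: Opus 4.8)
The plan is to deduce the corollary formally from Theorem~\ref{loc-glob} together with the standard identification of the centre of a unital algebra with the endomorphism ring of its regular bimodule. First I would recall that $A$ is a \emph{unital} associative algebra: $K_1$ is a compact open subgroup of $G(\KK)$, so the suitably normalized characteristic function $e$ of $K_1$ is an idempotent acting as the identity on $A$, and under the embedding $F[G(\kk)]\ho A$ it corresponds to the unit of the group algebra. This unitality is the only structural input needed besides Theorem~\ref{loc-glob}.

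Next, by Theorem~\ref{loc-glob} there is an isomorphism of $A$-bimodules $V\simeq A$, where $A$ is equipped with its regular bimodule structure (left multiplication playing the role of the Hecke action at $0$, and right multiplication, transported through the anti-involution $\iota$, playing the role of the Hecke action at $\infty$). Consequently $\End_{A\otimes A}(V)\simeq\End_{A\otimes A}(A)$, and it remains to identify $\End_{A\otimes A}(A)$ with $\calZ(A)$. For this, given $\phi\in\End_{A\otimes A}(A)$ set $z:=\phi(e)$; then for every $a\in A$, using $a=ea=ae$ and the fact that $\phi$ commutes with left and with right multiplication, one gets $\phi(a)=a\phi(e)=az$ and $\phi(a)=\phi(e)a=za$. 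In particular $az=za$ for all $a$, so $z\in\calZ(A)$ and $\phi$ is right multiplication by $z$; conversely, for any $z\in\calZ(A)$ the map $a\mapsto za$ lies in $\End_{A\otimes A}(A)$. These assignments are mutually inverse and one checks at once that $z\mapsto(a\mapsto za)$ is an algebra isomorphism $\calZ(A)\xrightarrow{\sim}\End_{A\otimes A}(A)$. Composing with the isomorphism of Theorem~\ref{loc-glob} yields $\calZ(A)\simeq\End_{A\otimes A}(V)$.

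I do not expect a genuine obstacle here: all the real content is contained in Theorem~\ref{loc-glob}, and the rest is the routine computation above. The two points that deserve care are (i) verifying that $A$ is genuinely unital, so that the argument ``$\phi(a)=\phi(ea)=\phi(e)a$'' is legitimate, and (ii) bookkeeping with the anti-involution $\iota$, so that the right action at $\infty$ on $V$ really does match honest right multiplication in $A$ under the identification of Theorem~\ref{loc-glob}; once these are settled the corollary is immediate.
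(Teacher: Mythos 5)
Your argument is correct and is exactly what the paper has in mind: the corollary is stated without proof because it is the standard fact that for a unital algebra $A$ the endomorphism ring of the regular bimodule is $\calZ(A)$, combined with the bimodule isomorphism $V\simeq A$ from Theorem~\ref{loc-glob}. Your two points of care (unitality of $A$ via the idempotent supported on $K_1$, and the bookkeeping with $\iota$) are the right ones and both check out.
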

We do not know any simple description of $\iota$ but it will satisfy the following properties:

1) It preserves the subalgebra $F[G(\kk)]\subset A$, and on this subalgebra it is induced by the map $g\mapsto g^{-1}$.

2) Let $e_{fin}$ be the idempotent $\frac{1}{\# G(\kk)}\sum_{g\in G(\kk)} g \in F[\GG(\FF_q)]\subset A$. Then the induced anti-involution on $e_{fin}A e_{fin}$ is equal to identity (note that $e_{fin}A e_{fin}$ is the spherical Hecke algebra of $G$ which is commutative).

\noindent
Let us also note that $\iota$ is not completely local in the sense that it depends on the choice of a generator of the maximal ideal of $\kk[[t]]$.

We shall explain the strategy of the proof of Theorem \ref{loc-glob} in Subsection \ref{sketch}.
%------------------------------------------------------------------------------------------------------------------------
\subsection{Hecke operators at other points and stable center}
Let $D$ be a subset of $\okk^{\x}$ invariant under the Galois group of $\okk$ over $\kk$. Let also $f\in F[G^{\vee}]^{G^{\vee}}$.
%Let also $\tau:F[G^{\vee}]^{G^{\vee}}\to F[G^{\vee}]^{G^{\vee}}$ be the involution which
To this data one can in the standard way associate the Hecke operator $h_{D,f}\in \End_{A\otimes A}(V)$. According to
Corollary \ref{loc-glob-cent} we can regard $h_{D,f}$ as an element of $\calZ(A)$.

\begin{theorem}\label{cent-div}
\begin{enumerate}[label=(\roman*)]
\item
We have $h_{D,f}\in \calZ_{st}$. In particular, $h_{D,f}$ corresponds to a function $\phi_{D,f}$ on $Z(G^{\vee},\kk)$.
\item
Assume that $D$ is the $\Gal(\okk/\kk)$-orbit of some $x\in \kk_i^{\x}$ which does not belong to any $\kk_j$ with $j<i$.
Then
\begin{equation}\label{formula}
\phi_{D,f}(s,u)=f(s^i\, u(y))
\end{equation}
where $y\in \kk_n^{\x}$  for some $n\geq i$ is such that $u$ factorizes through $\kk_n^{\x}$, $n$ is divisible by $i$ and $\Norm_{n,i}(y)=x$ (in particular, we claim that the RHS of (\ref{formula}) does not depend on the choice of $n$ and $y$).
\end{enumerate}
\end{theorem}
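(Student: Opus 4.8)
The plan is to reinterpret the target algebra $\calZ_{st}(A)$ through the global model $\PP^1$ and then to recognize each $h_{D,f}$ as an excursion operator on $V$. By Corollary~\ref{loc-glob-cent} every $h_{D,f}\in\End_{A\otimes A}(V)$ is already an element of $\calZ(A)$, and for (i) it is enough to treat the case in which $D$ is a single $\Gal(\okk/\kk)$-orbit, i.e.\ a closed point of $\mathbb G_m=\PP^1\setminus\{0,\infty\}$, the general case following because $\calZ_{st}$ is a subalgebra. The structural input I would use is that $(\PP^1;0,\infty)$, with level $K_1$ imposed at $0$, is a global avatar of $\KK$, so that $V$ is the associated space of automorphic forms and, via Theorem~\ref{loc-glob}, V.~Lafforgue's excursion operators attached to this global curve realize the Genestier--Lafforgue homomorphism of \cite{GeLa} (this is the local--global compatibility in \emph{loc.\ cit.}); hence $\calZ_{st}(A)$ is the subalgebra of $\End_{A\otimes A}(V)=\calZ(A)$ generated by those operators. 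The relevant parameter space does not change from local to global because $\pi_1^{\mathrm{tame}}(\mathbb G_m/\kk)$ is canonically the tame Weil group of $\KK$: it is $\langle\Fr\rangle\ltimes P$ with $P=\varprojlim_i\kk_i^\times$ and $\Fr\gamma\Fr^{-1}=\gamma^q$, and a point $(s,u)$ of $Z(G^\vee,\kk)$ is exactly the $G^\vee$-conjugacy class of the homomorphism $\rho$ with $\rho(\Fr)=s$, $\rho|_P=u$.

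For part (i) I would then invoke the fact that Hecke operators are special excursion operators. Writing $f=\sum_W c_W\chi_W$ as a combination of characters of representations $W$ of $G^\vee$ and using Satake to identify the corresponding Hecke operator at the closed point $D$, one has, up to the usual matrix-coefficient bookkeeping,
\[
h_{D,f}=S_{\{1,2\},\,\tilde f,\,(\Fr_D,\,1)},\qquad \tilde f(g_1,g_2):=f(g_1g_2^{-1}),
\]
where $\Fr_D\in\pi_1^{\mathrm{tame}}(\mathbb G_m/\kk)$ is a Frobenius element at $D$; this is Lafforgue's presentation of a Hecke operator as ``creation, Frobenius at $D$, annihilation''. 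Consequently $h_{D,f}\in\calZ_{st}(A)$, and the function it defines on $Z(G^\vee,\kk)$ is the one attached to this excursion operator, namely $\phi_{D,f}(\rho)=\tilde f(\rho(\Fr_D),\rho(1))=f(\rho(\Fr_D))$, which is manifestly $G^\vee$-invariant. This proves (i) and reduces (ii) to identifying the conjugacy class $\rho(\Fr_D)$ in terms of $(s,u)$.

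For (ii), let $D$ be the orbit of $x\in\kk_i^\times$ with $x\notin\kk_j$ for $j<i$, so $D$ has degree $i$. I would compute the Frobenius conjugacy class at $D$ inside $\pi_1^{\mathrm{tame}}(\mathbb G_m/\kk)=\langle\Fr\rangle\ltimes P$ by Kummer theory: pulling the covers $[m]\colon\mathbb G_m\to\mathbb G_m$, $z\mapsto z^m$ (for $p\nmid m$) back along $x\colon\Spec\kk_i\to\mathbb G_m$ and tracking the geometric Frobenius $\Fr^i\in\Gal(\okk/\kk_i)$ through the resulting $\mu_m$-torsor, one finds that in the finite quotient $\kk_n^\times\rtimes\Gal(\kk_n/\kk)$ (for any $n$ divisible by $i$) the Frobenius class at $D$ is precisely $\{(y,\Fr^i):y\in\kk_n^\times,\ \Norm_{n,i}(y)=x\}$; equivalently, it is represented in $\langle\Fr\rangle\ltimes P$ by $\gamma_x\Fr^i$ for any $\gamma_x\in P$ mapping to a norm-preimage $y$ of $x$. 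Choosing $n$ divisible by $i$ and large enough that $u$ factorizes through $\kk_n^\times$ — possible by the finiteness statement in Subsection~\ref{setup} — we obtain $\rho(\Fr_D)=u(\gamma_x)\,s^i=u(y)\,s^i$, and since $f$ is a class function $f(u(y)s^i)=f(s^iu(y))$, which is exactly (\ref{formula}). The asserted independence of the choices of $n$ and $y$ is then automatic: $u(\gamma_x)$ depends only on $\gamma_x\in P$, and a different admissible $(n,y)$ merely replaces $\gamma_x$ by another representative of the same Frobenius coset.

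The step I expect to be the main obstacle is the one in the first paragraph: checking that the excursion operators attached to the global model $(\PP^1;0,\infty)$ — which here act on the honest function space $V=C_c(\bpo(\kk))$, not on a shtuka cohomology — genuinely compute the map of \cite{GeLa}, i.e.\ the local--global compatibility of that construction, together with the (routine but not entirely formal) verification that $\PP^1$ bearing level $K_1$ at the two marked points is an admissible global model for $\KK$. Everything after that is either formal (part (i)) or the Kummer-theoretic bookkeeping of part (ii). If one prefers to stay closer to the elementary setup of this note, an alternative is to evaluate $h_{D,f}$ on each irreducible $A$-module $M$ directly: such $M$ has depth zero, so its Genestier--Lafforgue parameter $\rho_M$ lies in $Z(G^\vee,\kk)$; the global Langlands correspondence on $\PP^1$ identifies the associated $G^\vee$-local system on $\mathbb G_m$ with $\rho_M$; and the Hecke eigenvalue of $h_{D,f}$ on $M$ is $f$ applied to the Frobenius of that local system at $D$, computed exactly as above. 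Since the characters $M\mapsto\phi(\rho_M)$ separate the points of $F[Z(G^\vee,\kk)]^{G^\vee}$, this again determines $h_{D,f}$ — but establishing a priori that $h_{D,f}\in\calZ_{st}$ still requires input of the first kind.
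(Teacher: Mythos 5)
Your proposal is correct and, in substance, follows the same route as the paper: both rely on the description of $\pi_1^{\mathrm{tame}}(\mathbb G_m/\kk)$ as $\widehat\ZZ\ltimes\varprojlim\kk_i^\times$, on L.~Lafforgue's global correspondence over $\kk(\PP^1)$, on the computation of the Frobenius conjugacy class at $D$ in terms of $(s,u)$, and on the fact (a consequence of Theorem~\ref{loc-glob}) that every irreducible $A$-module occurs as $\pi_0$ for an automorphic $\pi$ of the required type. Your ``alternative'' paragraph — evaluate $h_{D,f}$ on each irreducible $A$-module, read off the eigenvalue via the global Langlands parameter, and conclude by separation of points — is essentially verbatim what the paper does: it deduces~(i) from~(ii), and proves~(ii) by exhibiting $\rho$ with $\rho(\Fr_D)$ conjugate to $s^i u(y)$ and noting that all irreducibles are reached globally.

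Where you differ is mainly in packaging: your primary argument phrases~(i) in V.~Lafforgue's excursion-operator formalism ($h_{D,f}=S_{\{1,2\},\tilde f,(\Fr_D,1)}$), whereas the paper never names excursion operators and works directly with the existence statement for $\rho$ from \cite{Laf}. This buys you a cleaner conceptual explanation of why $h_{D,f}$ lands in $\calZ_{st}$ in the first place, at the cost of needing the Genestier–Lafforgue local-global compatibility as an explicit input. You are right to flag that compatibility as the genuine nontrivial ingredient: the paper's phrasing (``a homomorphism $\Gam\to G^\vee$ is given by a pair $(s,u)$\ldots the proof then follows from the following two observations'') silently identifies the pair $(s,u)$ attached to the local component $\pi_0$ via \cite{GeLa} with the one read off from the global $\rho$, which is precisely the compatibility you single out. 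Your Kummer-theoretic verification of observation~1) is also slightly more detailed than what the paper records, but it is the same computation the paper has in mind.
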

It is easy to see that Theorem \ref{cent-div} implies the following
\begin{corollary}
$\calZ_{st}(A)$ is the subalgebra of $\calZ(A)$ generated by all the $h_{D,f}$.
\end{corollary}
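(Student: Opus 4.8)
The plan is to trade the statement about $\calZ(A)$ for one about $\calZ_{st}(A)$ on the dual side. By Theorem~\ref{cent-div}(i) each $h_{D,f}$ already lies in $\calZ_{st}(A)$, so the subalgebra $B\subseteq\calZ(A)$ they generate is contained in $\calZ_{st}(A)$. Conversely $\calZ_{st}(A)$ is, by definition, the image of the homomorphism $F[Z(G^\vee,\kk)]^{G^\vee}\to\calZ(A)$ of Subsection~\ref{setup}, and by Theorem~\ref{cent-div}(i) the operator $h_{D,f}$ is the image of the function $\phi_{D,f}$. Hence everything reduces to proving that the functions $\phi_{D,f}$ generate the $F$-algebra $F[Z(G^\vee,\kk)]^{G^\vee}$, and since by \cite{GeLa} that homomorphism is injective this is exactly what is needed.

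To do this I would first make $Z(G^\vee,\kk)$ concrete. Fix $N$, depending only on $G$, so that every $u$ appearing in a pair $(s,u)\in Z(G^\vee,\kk)$ factorizes through $\kk_N^{\x}$, and fix a generator $\z$ of the cyclic group $\kk_N^{\x}$. Then $(s,u)\mapsto(s,u(\z))$ realizes $Z(G^\vee,\kk)$ as a closed $G^\vee$-stable subscheme of $G^\vee\x G^\vee$ (with $G^\vee$ acting by simultaneous conjugation), cut out, in terms of the coordinate $t$ on the second factor, by $t^{q^N-1}=1$ and $sts^{-1}=t^q$. By classical invariant theory $F[G^\vee\x G^\vee]^{G^\vee}$ is generated by finitely many functions $(g_1,g_2)\mapsto\chi_V\bigl(w(g_1,g_2)\bigr)$ with $w$ a word in $g_1^{\pm1},g_2^{\pm1}$ and $V\in\Rep(G^\vee)$, and the same functions restrict to generators of the coordinate ring of $Z(G^\vee,\kk)$. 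On that subscheme the relation $sts^{-1}=t^q$ lets one push every occurrence of $s$ past every occurrence of $t$, so each such word equals $s^a\,t^b$ for some $a\in\mZ$, $b\in\mZ/(q^N-1)$; and conjugating $s^at^b$ by $s$ gives $\chi_V(s^at^{bq})=\chi_V(s^at^b)$. The task thus becomes: for every $V$, every $a\in\mZ$ and every $b$, show that $\psi_{V,a,b}\colon(s,u)\mapsto\chi_V\bigl(s^a\,u(\z)^b\bigr)$ lies in the subalgebra generated by the $\phi_{D,f}$.

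For $a\ge1$ this is what formula~(\ref{formula}) gives: one chooses $n$ divisible by $a$ and by $N$ (and large) and $y\in\kk_n^{\x}$ with $\Norm_{n,N}(y)=\z^b$ (up to the $\langle q\rangle$-action, which is harmless by the previous paragraph) and with $x:=\Norm_{n,a}(y)$ of exact degree $a$ over $\kk$, and then $\psi_{V,a,b}=\phi_{D,\chi_V}$ for $D=\Gal(\okk/\kk)\cdot x$. The case $a\le-1$ reduces to $a\ge1$ via $\chi_V(g^{-1})=\chi_{V^\ast}(g)$ and the relation $sts^{-1}=t^q$, which carries $\psi_{V,-a,b}$ to $\psi_{V^\ast,a,b'}$ for a suitable $b'$. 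The remaining case $a=0$, i.e.\ the ``untwisted'' functions $(s,u)\mapsto\chi_V(u(\z)^b)$, is not directly of the shape~(\ref{formula}); here I would use a Cayley--Hamilton argument: writing out the characteristic polynomial of $s$ acting on a faithful $V_0$ expresses $\chi_{V_0}(u(\z)^b)$ in terms of $\chi_{V_0}(s^{a'}u(\z)^b)$ for $1\le a'\le\dim V_0$ together with the invariants $\chi_{\wedge^iV_0}(s)=\psi_{\wedge^iV_0,1,0}$ and $\det_{V_0}(s)^{-1}$, all of which are already available, and since $\Rep(G^\vee)$ is tensor-generated by $V_0$ and $V_0^\ast$ this upgrades to all $\psi_{V,0,b}$.

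The conceptual skeleton above is short; the part I expect to be the real obstacle is the arithmetic matching used for $a\ne0$ (and implicitly in the Cayley--Hamilton step): one must make sure that for the given $a$, $b$, $N$ there really is a $y$ in some $\kk_n^{\x}$ whose two norms $\Norm_{n,N}(y)$ and $\Norm_{n,a}(y)$ simultaneously hit the prescribed twist $\z^b$ and an element of exact degree $a$. This is elementary but fiddly --- it rests on surjectivity of the norm maps among the $\kk_i^{\x}$, on the density of primitive elements in the relevant cosets, and on a careful choice of $n$ (and, if need be, of $N$ with controlled divisibility). Equivalently, one may read $Z(G^\vee,\kk)$ as the scheme of $G^\vee$-valued parameters of the tame Weil group of $\kk((t))$, with $s$ the Frobenius and $u$ the tame generator and the $\phi_{D,f}$ the corresponding Frobenius-trace functions, so that the corollary becomes the expected assertion that these generate the coordinate ring of the parameter scheme; the proof still comes down to the same bookkeeping.
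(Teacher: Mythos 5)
Your overall strategy is surely the one the authors have in mind when they call the implication ``easy to see'': one inclusion is immediate from Theorem \ref{cent-div}(i), and the other reduces to showing that the functions $\phi_{D,f}$ generate $F[Z(G^{\vee},\kk)]^{G^{\vee}}$ (for this direction the injectivity you invoke is not actually needed: generators of the source map to generators of the image). Presenting the invariant ring by trace functions of words and collapsing every word to $s^a t^b$ via $sts^{-1}=t^q$ is fine, as is the treatment of $a\le -1$ by duality.

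The gap is in the step you yourself flag as ``fiddly'': for $a\ge 1$ the required $y$ need not exist, for \emph{any} $n$. Both groups being cyclic, the joint image of $(\Norm_{n,N},\Norm_{n,a}):\kk_n^{\x}\to\kk_N^{\x}\x\kk_a^{\x}$ is the cyclic subgroup of order $\operatorname{lcm}(q^N-1,q^a-1)$, which is exactly the fibre product over $\kk_g^{\x}$ with $g=\gcd(a,N)$. Hence the condition $\Norm_{n,N}(y)=\z^b$ forces $\Norm_{a,g}(x)=\Norm_{N,g}(\z^b)$; when $a\mid N$ this pins $x$ down to the single element $\Norm_{N,a}(\z^b)$, which need not have exact degree $a$, and replacing $\z^b$ by a Galois conjugate does not change that degree. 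Concretely, for $N>1$, $a=N$ and $b=(q^N-1)/(q-1)$ the function $(s,u)\mapsto\chi_V\bigl(s^N u(\z)^b\bigr)$ (whose $u$-argument lies in $\kk^{\x}$) is not any $\phi_{D,\chi_V}$: that would force $\deg D=N$ and $x=\z^b\in\kk^{\x}$. Enlarging $N$ or $n$ cannot help, since the generators one must produce are indexed by the actual image of $u$. These degenerate functions are genuinely needed, and they have to be manufactured \emph{algebraically} from the honest $\phi_{D,f}$, by the same device as your $a=0$ step: write $s^au(w)$ as a power or product of achievable elements $s^{a'}u(v)$ with $\gcd(a',N)<a'$, and use $\chi_V(g^k)=\chi_{\psi^k V}(g)$ (Adams operations) and Cayley--Hamilton to turn traces of powers and products back into the available single traces; for $\SL(2)$ one has, e.g., $\chi(s^2u(c))=\chi(su(v))^2-2$ whenever $\Norm_{2,1}(v)=c\in\kk^{\x}$. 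Note also that your $a=0$ argument consumes the cases $a'\ge 1$, so the gap propagates there. The statement and your route to it are salvageable, but as written the key generation step fails in these divisibility-degenerate cases.
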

\subsection{Action on cuspidal representations of $G(\kk)$}\label{cusp-int}
It is clear that stack $\bpo$ is acted on by $G\times G$ (by changing the trivializations at $0$ and $\infty$), hence the group $G(\kk)\times G(\kk)$ acts on $V$.
Let $V_{\cusp}$ denote the maximal cuspidal $G(\kk)\times G(\kk)$-subrepresentation  of $V$ (it is easy to see that
$V_{\cusp}$ is also the maximal cuspidal $G(\kk)$-subrepresentation  of $V$ with respect to either action of $G(\kk)$ -- this follows immediately from Theorem \ref{loc-glob}). It is clear that $V_{\cusp}$ is invariant under the action of $A\otimes A$ and also under the operators $h_{D,f}$.
s

The following lemma is easy:
\begin{lemma}\label{lem-cusp}
Assume that $G$ is semi-simple. Then
$V_{\cusp}=F_{\cusp}(G(\kk))$ (i.e. the cuspidal part of the regular representation of $G(\kk)$.
\end{lemma}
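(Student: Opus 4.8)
The plan is to stratify $\bpo(\kk)$ by the isomorphism type of the underlying $G$-bundle and to show that only the trivial bundle contributes to the cuspidal part. Since $G$ is split, the isomorphism classes of $G$-bundles on $\PP^1$ over $\kk$ are indexed by dominant coweights $\lambda$, with $\mathcal E_0$ the trivial bundle and $\mathcal E_\lambda$ the standard representative; this classical fact (Birkhoff--Grothendieck for $\GL_n$, Harder in general) holds already over the finite field $\kk$ precisely because $G$ is split. Writing $\bpo(\kk)=\bigsqcup_\lambda\bpo(\kk)_\lambda$ for the locus of triples lying over $\mathcal E_\lambda$, this decomposition is $G(\kk)\times G(\kk)$-stable, since changing the trivializations at $0$ and $\infty$ does not change the underlying bundle. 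Hence $V=\bigoplus_\lambda F[\bpo(\kk)_\lambda]$, and therefore $V_{\cusp}=\bigoplus_\lambda F[\bpo(\kk)_\lambda]_{\cusp}$, as $G(\kk)\times G(\kk)$-modules.

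Next I would realize each stratum as a homogeneous space. The fibre $\mathcal E_\lambda|_0$ is a $G$-torsor over $\Spec\kk$, hence trivial by Lang's theorem (and likewise at $\infty$), so the set of pairs (trivialization at $0$, trivialization at $\infty$) of $\mathcal E_\lambda$ is a $G(\kk)\times G(\kk)$-torsor, and $\bpo(\kk)_\lambda$ is its quotient by $\Aut_\kk(\mathcal E_\lambda)$ acting through the evaluation map $(\operatorname{ev}_0,\operatorname{ev}_\infty)\colon\Aut_\kk(\mathcal E_\lambda)\to G(\kk)\times G(\kk)$. Letting $H_\lambda$ denote the image, we obtain $\bpo(\kk)_\lambda\cong H_\lambda\backslash(G(\kk)\times G(\kk))$, so that $F[\bpo(\kk)_\lambda]\cong\mathrm{Ind}_{H_\lambda}^{G(\kk)\times G(\kk)}\mathbf 1$ (induction of the trivial character).

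When $\lambda=0$ the bundle is trivial, each of its automorphisms over $\PP^1$ is a constant element of $G(\kk)$ and both evaluations are the identity, so $H_0=\Delta G(\kk)$ and $F[\bpo(\kk)_0]\cong\mathrm{Ind}_{\Delta G(\kk)}^{G(\kk)\times G(\kk)}\mathbf 1\cong F[G(\kk)]$, the two-sided regular representation; its cuspidal $G(\kk)\times G(\kk)$-constituents are exactly the $\pi\boxtimes\pi^{*}$ with $\pi$ cuspidal, and they span $F_{\cusp}(G(\kk))$. When $\lambda\neq 0$, since $G$ is semisimple the standard parabolic $P_\lambda$ attached to $\lambda$ is \emph{proper}, so its unipotent radical $U_\lambda$ is nontrivial, and $\Aut_\kk(\mathcal E_\lambda)$ contains a copy of $U_\lambda(\kk)$ made of automorphisms that are unipotent near $0$ and trivial near $\infty$ — for $\GL_n$ these are the block-unipotent elements of $\GL_n(\kk[t])$ whose off-diagonal polynomial entries vanish at $\infty$, and in general they are built from the root subgroups $U_\alpha$ with $\langle\alpha,\lambda\rangle>0$, using that the summand $\mathcal O(\langle\alpha,\lambda\rangle)$ of $\operatorname{ad}\mathcal E_\lambda$ along $\mathfrak g_\alpha$ has sections taking an arbitrary value at $0$ which vanish at $\infty$. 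Choosing the base point suitably, $U_\lambda(\kk)\times\{1\}\subseteq H_\lambda$, so by transitivity of induction every irreducible constituent of $F[\bpo(\kk)_\lambda]$ occurs in $\mathrm{Ind}_{U_\lambda(\kk)\times\{1\}}^{G(\kk)\times G(\kk)}\mathbf 1=(\mathrm{Ind}_{U_\lambda(\kk)}^{G(\kk)}\mathbf 1)\boxtimes F[G(\kk)]$. By Frobenius reciprocity $\mathrm{Hom}_{G(\kk)}(\pi,\mathrm{Ind}_{U_\lambda(\kk)}^{G(\kk)}\mathbf 1)=\pi^{U_\lambda(\kk)}=0$ for every cuspidal $\pi$, since the Jacquet module of a cuspidal representation along the proper parabolic $P_\lambda$ vanishes; hence $\mathrm{Ind}_{U_\lambda(\kk)}^{G(\kk)}\mathbf 1$, and therefore the box product above and all its constituents, contains no cuspidal representation, so $F[\bpo(\kk)_\lambda]_{\cusp}=0$. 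Summing over $\lambda$ yields $V_{\cusp}=F[\bpo(\kk)_0]_{\cusp}=F_{\cusp}(G(\kk))$.

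The main obstacle is the structural input in the second and third steps: computing $\Aut_\kk(\mathcal E_\lambda)$ and its image $H_\lambda$ in $G(\kk)\times G(\kk)$ for a general split semisimple $G$ (rather than just $\GL_n$), and in particular producing the unipotent subgroup $U_\lambda(\kk)\times\{1\}\subseteq H_\lambda$ when $\lambda\neq 0$; once this is in place the representation-theoretic conclusion is immediate from Frobenius reciprocity and the vanishing of Jacquet modules of cuspidal representations. One could instead transport the $G(\kk)\times G(\kk)$-action through the isomorphism $V\cong A$ of Theorem~\ref{loc-glob} and argue with $K_1$-double cosets in $G(\KK)$, but the stratification and the computation of point-stabilizers are cleanest on the geometric side.
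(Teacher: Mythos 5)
Your proof is correct and follows essentially the same route as the paper's: stratify $V$ by isomorphism type of the underlying bundle, observe that each nontrivial stratum is (Harish--Chandra) induced through a proper parabolic $P_\lambda\neq G$ when $\lambda\neq 0$ and so contributes nothing cuspidal, and identify the $\lambda=0$ stratum with the two-sided regular representation of $G(\kk)$. The structural input you flag as the ``main obstacle'' --- computing the image of $\Aut_\kk(\mathcal E_\lambda)$ in $G(\kk)\times G(\kk)$ --- is exactly the content of the paper's Theorem~\ref{global} (equivalently (\ref{formula-global}), $V_\lambda\simeq F\bigl((G/U_\lambda\underset{M_\lambda}\times G/U_\lambda)(\kk)\bigr)$), which the paper's one-line proof simply cites; you could invoke it instead of redoing the stabilizer computation.
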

In particular, the restriction of every $h_{D,f}$ to $V_{\cusp}$ defines a $G(\kk)\times G(\kk)$-endomorphism of $F_{\cusp}(G(\kk))$. Such a data is equivalent to a function $\eta_{D,f}:\Irr_{\cusp}(G(\kk))\to F$ where
$\Irr_{\cusp}(G(\kk))$ denotes the set of isomorphism classes of irreducible cuspidal representations of $G(\kk)$.
We {\color{red} will now} describe this function explicitly in the case when $G=\PGL(N)$ and $f$ is the trace function.

First of all,
for every $a\geq 1$ let us denote by $\calR_a$ the Grothedieck group of representations of $\PGL(N,\kk_a)$;
we shall write $\calR$ instead of $\calR_1$. We have a natural map $\Lift_a:\calR\to \calR_a$ defined as follows.
It is known (that $\calR_a$ is freely spanned by the classes of Deligne-Lusztig virtual representations $R(T,\theta)$ where
$T$ is a maximal torus in $\PGL(N)$ defined over $\kk$ and and $\theta:T(\kk)\to \CC^{\x}$ (the representation $R(T,\theta)$ depends only on the pair $(T,\theta)$ up to conjugacy). Given $a\geq 1$ and a pair $(T,\theta)$ we get by extension of scalars a torus $T_a$ in $\PGL(N,\kk_a)$ endowed with the norm map $T_a(\kk)\to T(\kk)$; composing this map with $\theta$ we get a character $\theta_a$ of $T_a(\kk)$. The map $\Lift_a$ is characterized by the condition that it sends the class of $R(T,\theta)$ to the class of $R(T_a,\theta_a)$.

We shall also assume that $D$ is as in Theorem \ref{cent-div}(2).
Then we have the following.
\begin{theorem}\label{lift-gln}
Under the above conditions on $D$ and $f$ we have:
\begin{enumerate}[label=(\roman*)]
\item
$\eta_{D,f}=0$ if $i$ is not divisible by $N$.
\item
Assume that $i=N$. Any $x\in \kk_i^{\x}=\kk_N^{\x}$ can be thought of as an element of $\GL(N,\kk)$ defined up to conjugacy; abusing the notation we denote its image in $\PGL(N,\kk)$ by the same letter.
Then for any $\pi\in\Irr_{\cusp}(\PGL(N,\kk))$ we have
\begin{equation}
\eta_{D,f}(\pi)=\chi_{\pi}(x).
\end{equation}
Here $\chi_{\pi}$ is the character of $\pi$.
\item
Let now $i=aN$ with $a\geq 1$. Then
\begin{equation}\label{lifting}
\eta_{D,f}(\pi)=\chi_{\Lift_a(\pi)}(x)
\end{equation}
\end{enumerate}
\end{theorem}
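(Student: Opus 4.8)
The strategy is to combine Theorem \ref{cent-div}(2) with the explicit description of the scheme $Z(G^{\vee},\kk)$ for $G = \PGL(N)$, where $G^{\vee} = \SL(N)$. First I would unwind what the pair $(s,u) \in Z(\SL(N),\kk)$ means: $u$ is a homomorphism from $\varprojlim \kk_j^{\times}$ to $\SL(N)$ intertwining Frobenius with the $q$-power map on the source and conjugation by $s$ on the target. Choosing a topological generator (equivalently, working with the $\kk_n^{\times}$ through which $u$ factors), such data is essentially the same as a semisimple conjugacy class in $\GL(N,\kk_n)$ of determinant a root of unity, together with the Frobenius structure; this is exactly the Deligne--Lusztig parametrization of (the packets of) irreducible representations. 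Concretely, irreducible cuspidal representations of $\PGL(N,\kk)$ correspond to characters $\theta$ of $\kk_N^{\times}$ in general position, modulo the $\Gal(\kk_N/\kk)$-action and modulo twists by characters of $\kk^{\times}$ pulled back via the norm; on the dual side these are the pairs $(s,u)$ whose associated conjugacy class is \emph{anisotropic} of type $\kk_N^{\times}$, i.e. $u$ genuinely factors through $\kk_N^{\times}$ and not through any $\kk_j^{\times}$ with $j \mid N$, $j < N$.

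Next I would feed the formula \eqref{formula} into this dictionary. For $(s,u)$ parametrizing a cuspidal $\pi$, the element $s^i u(y) \in \SL(N)$ (with $\Norm_{n,i}(y) = x$) must be computed in terms of the Deligne--Lusztig data. Part (i) should then fall out: if $N \nmid i$, then for the cuspidal (anisotropic, type $\kk_N^{\times}$) locus the twisted condition $s u s^{-1} = u^q$ forces $s$ to act on the $\kk_N^{\times}$-torus by an order-$N$ cyclic permutation of eigenspaces, so $s^i$ is not scalar on that torus unless $N \mid i$; a short argument shows $f(s^i u(y))$ integrates to zero against the cuspidal part, or more cleanly, the operator $h_{D,f}$ annihilates $V_{\cusp}$ because the relevant Hecke modification cannot preserve the cuspidal line. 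For (ii), with $i = N$: here $s^N$ \emph{is} central on the $\kk_N^{\times}$-torus, $u(y)$ recovers (a representative of) the element $x \in \kk_N^{\times} \hookrightarrow \GL(N,\kk)$, and $f = \on{Tr}$ evaluated on $s^N u(y)$ should match $\chi_{\pi}(x)$ by the character formula for Deligne--Lusztig cuspidal representations of $\GL(N)$ (Green's theorem / the known values of $R(T,\theta)$ on the anisotropic torus), transported through the $\GL \to \PGL$ projection. Part (iii) is then the compatibility of \eqref{formula} with the lifting map $\Lift_a$: replacing $i = N$ by $i = aN$ replaces $x \in \kk_N^{\times}$ by $x \in \kk_{aN}^{\times}$ and, on the spectral side, replaces the datum $(T,\theta)$ by $(T_a, \theta_a)$, so that $\phi_{D,\on{Tr}}(s,u) = \chi_{\Lift_a(\pi)}(x)$ is precisely the base-changed character formula; I would verify this by checking that $\Norm_{aN,N}$ on tori matches the norm map $T_a(\kk) \to T(\kk)$ used to define $\Lift_a$, and invoking the Shintani/base-change character identity for Deligne--Lusztig representations of $\GL(N)$.

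The main obstacle I anticipate is the precise bookkeeping in (ii): matching the ``abstract'' element $s^N u(y) \in \SL(N)$ coming from \eqref{formula} with the element $x \in \kk_N^{\times}$ viewed in $\GL(N,\kk)$ up to conjugacy, correctly accounting for the $\GL$ versus $\SL$/$\PGL$ discrepancy (determinants, central characters, and the finite ambiguity in the choice of $n$ and $y$ asserted to be harmless in Theorem \ref{cent-div}(2)). One must check that the value $\chi_\pi(x)$ is well-defined independent of the lift of $x$ to $\GL(N,\kk)$ and of the twist ambiguity in the parametrization of $\pi$, which ultimately rests on the known multiplicativity and Frobenius-invariance properties of Deligne--Lusztig characters on anisotropic tori. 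Once this identification is nailed down, parts (i) and (iii) are comparatively formal consequences, (i) from a non-vanishing/support argument and (iii) from the functoriality of the Deligne--Lusztig construction under extension of scalars that already defines $\Lift_a$.
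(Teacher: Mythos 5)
Your proposal follows the same broad strategy as the paper for part (iii): reduce to Theorem \ref{cent-div}(2) and compare the right-hand side of \eqref{formula} with the known character values of Deligne--Lusztig representations. However there are notable differences, plus a genuine gap you should be aware of.

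For (i), the paper uses a much more elementary and purely geometric argument: $\Bun_{\PGL(N)}(\PP^1)$ has connected components indexed by $\ZZ/N\ZZ$, giving a $\ZZ/N\ZZ$-grading $V=\oplus V_d$; the isomorphism \eqref{formula-global} shows $(V_d)_{\cusp}=0$ unless $d=0$, while $h_{D,f}$ shifts $V_d\to V_{d+\bar i}$, so it kills $V_{\cusp}$ unless $N\mid i$. You allude to this (``the relevant Hecke modification cannot preserve the cuspidal line'') but don't pin it down; your alternate spectral argument (that $s^i$ is a fixed-point-free permutation when $N\nmid i$, so $\Tr(s^i u(y))=0$) does also work, but it is heavier machinery for a statement that has a two-line geometric proof. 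For (ii) the paper gives an \emph{independent} direct proof by explicitly identifying the degree-$N$ Hecke correspondence on trivial bundles with $\{(g_1,g_2):g_1g_2^{-1}\in\Ome_x\}$ (Lemma \ref{gln-orbit}), then computing $\eta_{D,f}(\pi)$ via the action of $\sum_{g\in\Ome_x}g$ together with the dimension formula $\dim\pi=\prod_{j=1}^{N-1}(q^j-1)$. You don't attempt this and instead subsume (ii) into (iii), which the paper also notes is logically possible; you thereby lose the self-contained geometric argument but that is acceptable.

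The real gap is in your treatment of (ii)/(iii): you appeal to ``the Deligne--Lusztig parametrization'' and ``a dictionary'' identifying $(s,u)$ with $(T,\theta)$, but never actually state or establish the key assertion that for $\pi=(-1)^{N-1}R(T,\theta)$ with $T(\kk)=\kk_N^{\x}/\kk^{\x}$ and $\theta$ in general position, the parameter of $(\ind_{G(\OO)}^{G(\KK)}\pi)^{K_1}$ has $u$ equal to the diagonal matrix built from $(\theta,\theta^q,\dots,\theta^{q^{N-1}})$ and $s$ the standard $N$-cycle. This is exactly the content of the paper's unnamed Lemma preceding the final computation, and it is the crux: without it, \eqref{formula} gives you an element $f(s^i u(y))\in F$ that you have no way to match against $\chi_\pi(x)$. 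The paper itself acknowledges this is nontrivial (it cites a generalization of \cite{BG-Eis} and postpones the full proof), but at minimum you need to formulate it precisely; as written, your proposal hides the hardest input inside phrases like ``this is exactly the Deligne--Lusztig parametrization.'' You also invoke Shintani/base-change identities for (iii), which the paper avoids: once the $(s,u)$ description is in hand, (iii) is just a direct evaluation of $\sum_{j=0}^{N-1}\theta(\Norm_{i,N}(x))^{q^j}$, with no separate base-change theorem needed.
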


Note that (\ref{lifting}) provides an explicit description of the character of $\Lift_a(\pi)$ on the set of elliptic elements in $\PGL(N,\kk_a)$.

\subsection{Sketch of the proof of Theorem \ref{loc-glob}}\label{sketch}
The proof of Theorem \ref{loc-glob} will be divided into the following steps (we sketch the proof here, since we think that
it is of independent interest).

{\em Step 1.} Since the algebra $A$
naturally acts on $V$, in order to define a map $\act:A\to V$ it is enough to choose  an element $\delta\in V$ and set $\act(a)=a\underset{0}\star \delta$ where $\underset{0}\star$ stands for the action of $A$ at $0$. We then need to show that $\act$ is an isomorphism -- i.e. that $V$ is a free module over $A$ of rank 1 with generator $\delta$. We shall choose $\delta$ to be the characteristic function of the trivial $G$-bundle on $\PP^1$ with the tautological trivialization at $0$ and $\infty$.

On the other hand $V$ is endowed with a natural involution coming from the map $z\to z^{-1}$ on $\PP^1$ which fixes $\delta$. If we know that $\act$ is an isomorphism, then we can transport this involution to a map $\iota:A\to A$.
Since $\delta$ is preserved by the above involution of $V$ we obviously have
$$
a\underset{0}\star \delta=\iota(a)\underset{\infty}\star \delta
$$
which immediately implies that $\iota(ab)=\iota(b)\iota(a)$, thus $\iota$ is an anti-involution of $A$. The verification of properties 1) and 2) after Corollary \ref{loc-glob-cent} is straightforward and we leave it to the reader.

{\em Step 2.} We now need to show that $\iota$ is an isomorphism.
Let $\Lam$ denote the coweight lattice of $G$ and let $\Lam^+$ be the set of dominant coweights. Both sets
are endowed with a standard partial order ($\lam>\mu$ if $\lam-\mu$ is a sum of positive roots), and both $A$ and $V$ acquire a natural filtration numbered by elements of $\Lam^+$ and the map $\act$ is compatible with the filtrations.
We shall denote the corresponding filtration components by $A_{\leq \lam}$ and $V_{\leq \lam}$.

These filtrations are
defined as follows. First, the double quotient $K_1\backslash G(\KK)/K_1$ maps naturally to $G(\OO)\backslash G(\KK)/G(\OO)$. It is known that the latter quotient is naturally identified with $\Lam^+$; we shall denote by $(G(\OO)\backslash G(\KK)/G(\OO))_{\leq \lam}$ the union of all double cosets corresponding to $\mu\leq \lam$. Similarly, we define $(K_1\backslash G(\KK)/K_1)_{\lam}$ to be the preimage of $(G(\OO)\backslash G(\KK)/G(\OO))_{\leq \lam}$ in $(G(\OO)\backslash G(\KK)/G(\OO))_{\leq \lam}$.
We then define $A_{\leq \lam}$ to be the subspace of $A$ consisting of functions supported on $(G(\OO)\backslash G(\KK)/G(\OO))_{\leq \lam}$. This is an algebra filtration on $A$.

Similarly, the set of isomorphism classes of $G$-bundles on $\PP^1$ is naturally identified with $\Lam^+$; hence the stack
$\bpo$ is stratified by elements of $\Lam^+$ and we define $V_{\leq \lam}$ similarly to $A_{\leq \lam}$.

{\em Step 3.} Since the map $\act$ is compatible with the above filtrations, in order to prove that $\act$ is an isomorphism, it is enough to show that it induces an isomorphism on the accociatd  graded spaces. So, we set $A_{\lam}=A_{\leq \lam}/A_{<\lam}$ and
$V_{\lam}=V_{\leq\lam}/V_{< \lam}$. We also denote by $\act_{\lam}$ the corresponding map $A_{\lam}\to V_{\lam}$.

To each $\lam$ we can naturally associate a parabolic subgroup $P_{\lam}$ of $G$ with a Levi subgroup $M_{\lam}$.
We also denote by $P_{\lam}^-$ the opposite parabolic (so $P_{\lam}\cap P_{\lam}^-=M_{\lam}$.
The we prove the following
\begin{theorem}\label{main-radon}
\begin{enumerate}[label=(\roman*)]
\item
We have natural isomorphisms
\begin{equation}\label{formula-global}
V_{\lam}\simeq F(((G/U_{\lam}\underset{M_{\lam}}\times G/U_{\lam}))(\kk))
\end{equation}
and
\begin{equation}\label{formula-local}
A_{\lam}\simeq F(((G/U_{\lam}\underset{M_{\lam}}\times G/U_{\lam}^-))(\kk)).
\end{equation}
Here $\underset{M_{\lam}}\times$ stands for the quotient of the product by the diagonal action on $M_{\lam}$.
\item
Under the above isomorphisms the map $\act_{\lam}$ is the standard intertwining operator corresponding to the pair $(P_{\lam},P_{\lam}^-)$ acting on the 2nd factor (cf. Section \ref{inter} for precise definitions).
\end{enumerate}
\end{theorem}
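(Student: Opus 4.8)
The plan is to establish (i) by directly computing the two quotient sets whose function spaces are $V_{\lam}$ and $A_{\lam}$, and to deduce (ii) by unwinding $\act_{\lam}$ as a correspondence and matching it with the intertwining operator of Section~\ref{inter}. For the global statement, by the classification of $G$-bundles on $\PP^1$ (Grothendieck) the stratum $S_{\lam}\subset\bpo$ over $\lam\in\Lam^+$ is the groupoid of triples whose underlying bundle is isomorphic to the standard bundle $\mcE_{\lam}$ glued from trivial bundles along the transition function $t^{\lam}$; fixing the tautological trivializations of $\mcE_{\lam}$ at $0$ and $\infty$ identifies $S_{\lam}(\kk)$ with the quotient of $G(\kk)\times G(\kk)$ (the pair of fibers) by the image of $\Aut(\mcE_{\lam})$. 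One computes $\Aut(\mcE_{\lam})$ from $H^0(\PP^1,-)$ of line bundles: evaluation at $0$ and at $\infty$ each carry $\Aut(\mcE_{\lam})$ onto the parabolic $P_{\lam}$ with unipotent radical $U_{\lam}$ and Levi $M_{\lam}$, the kernel of the joint map consisting of automorphisms whose off-diagonal entries vanish at both points, and the joint image in $G\times G$ being exactly $P_{\lam}\times_{M_{\lam}}P_{\lam}$ --- surjectivity onto the fibre product because a section of a line bundle of degree $\geq 1$ on $\PP^1$ has independent values at $0$ and $\infty$. This yields (\ref{formula-global}), $G(\kk)\times G(\kk)$-equivariantly.

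For the local statement, I would use that $G(\OO)t^{\lam}G(\OO)$ is the balanced product $G(\OO)\times^{H_{\lam}}G(\OO)$, where $H_{\lam}=G(\OO)\cap t^{\lam}G(\OO)t^{-\lam}$ acts by $(g_1,g_2)\cdot h=(g_1h,\,t^{-\lam}h^{-1}t^{\lam}g_2)$; dividing by $K_1$ on both sides turns $A_{\lam}$ into the functions on $G(\kk)\times G(\kk)$ modulo the image of $H_{\lam}$ under $h\mapsto(\bar h,\overline{t^{-\lam}ht^{\lam}})$. Since $\Ad(t^{\lam})$ dilates the root subgroup $U_{\alpha}$ by $t^{\langle\alpha,\lam\rangle}$, an Iwahori-type factorization of $H_{\lam}$ shows that the first component of this map is onto $P_{\lam}^-(\kk)$, the second onto $P_{\lam}(\kk)$, the two Levi components always agreeing and the $U_{\lam}^-$- and $U_{\lam}$-parts varying independently; hence the image is $P_{\lam}^-\times_{M_{\lam}}P_{\lam}$, and after the evident flip of factors we obtain (\ref{formula-local}). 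The point needing attention here is the appearance of the opposite parabolic on one side (forced precisely by this dilation), and the need to fix the local and global identifications compatibly with respect to a single pinning of $G$.

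For (ii), unwinding the definitions shows that (up to the obvious conventions) for $h\in G(\KK)$ the function $\act(\mathbf{1}_{K_1hK_1})$ on $\bpo(\kk)=G(\kk[t^{-1}])_1\backslash G(\KK)/K_1$ assigns to the class of $x$ the cardinality $\#\{y\in G(\kk[t^{-1}])_1:\ y^{-1}x\in K_1hK_1\}$; equivalently $\act_{\lam}$ is the push--pull along the Hecke correspondence at $0$, restricted to the $\lam$-stratum and passed to the associated graded. I would identify this correspondence, through the isomorphisms of (i), with the ``generic relative position'' correspondence between $G/U_{\lam}^-$ and $G/U_{\lam}$ fibered over $G/U_{\lam}$ (with the first factor inert), which is exactly the one computing the standard intertwiner for $(P_{\lam},P_{\lam}^-)$ acting on the second factor. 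Concretely one checks that, modulo the lower strata, the fibres of $G(\OO)t^{\lam}G(\OO)/K_1\to\bpo(\kk)$ over a point of $S_{\lam}(\kk)$ are affine spaces of dimension $\dim U_{\lam}$ mapping isomorphically onto the relevant $U_{\lam}(\kk)$-orbit in the $G/U_{\lam}^-$-factor, so that the counting function above is precisely the $\{0,1\}$-valued kernel of the intertwiner. As a consistency check, $\delta$ is fixed by the diagonal $G(\kk)\subset G(\kk)\times G(\kk)$, whence $\act(g_1ag_2)=g_1\underset{0}\star\bigl(g_2^{-1}\underset{\infty}\star\act(a)\bigr)$, so $\act_{\lam}$ is $G(\kk)\times G(\kk)$-equivariant, as is the asserted intertwiner.

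I expect the last identification to be the main obstacle: to show that the a priori complicated, possibly multivalued correspondence coming from $K_1$-double cosets at $0$ degenerates, on the associated graded, exactly to the correspondence defining the standard intertwiner --- with the correct multiplicities and with no contribution from the lower strata. This reduces to an affine-Grassmannian (MV-cycle type) dimension estimate for the fibres of $G(\OO)t^{\lam}G(\OO)/K_1\to\bpo(\kk)$ over the open part of each stratum, together with careful bookkeeping of the trivializations on the two sides so that the resulting operator is literally the intertwiner of Section~\ref{inter} rather than a twist of it.
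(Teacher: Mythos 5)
Your treatment of part (i) is essentially the paper's argument. For the global piece, both you and the paper identify $\bpo^{\lam}$ with the quotient of $G\times G$ by the image of $\Aut(\calO(\lam))$ evaluated at $0$ and $\infty$, with image $\{(p,p')\in P_{\lam}\times P_{\lam}: \pi_{\lam}(p)=\pi_{\lam}(p')\}$; the paper packages the construction of the map via the Harder--Narasimhan reduction plus the choice of the auxiliary point $z$, whereas you go directly to the stabilizer computation, but the content (including the role of the degree $\geq 1$ sections in filling out $U_{\lam}\times U_{\lam}$) is the same. Likewise for the local piece: you compute the image of $H_{\lam}=G(\OO)\cap t^{\lam}G(\OO)t^{-\lam}$ in $G(\kk)\times G(\kk)$ directly from the $\Ad(t^{\lam})$-dilation of root subgroups; the paper organizes the same computation through Jantzen filtrations of the fibres at $0$, which gives a basis-free reason for the $P_{\lam}$ versus $P_{\lam}^-$ asymmetry, but the resulting stabilizer and graded piece agree with yours.

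For part (ii) your outline is pointed in the right direction but stops exactly where the real work lies, and you say so yourself. The paper does not proceed by a dimension estimate on fibres of $G(\OO)t^{\lam}G(\OO)/K_1\to\bpo(\kk)$. Instead it introduces a third space $\calC^{\lam}$, the moduli of triples consisting of a $G$-bundle $\calF$ on $\PP^1$ of type $\lam$, a trivialization of the fibre $\calF_0$, and a trivialization of $\calF|_{\PP^1\setminus\{0\}}$ of type $\lam$ near $0$. This space maps to both $\calH^{\lam}$ (restrict to the formal disc at $0$) and $\bpo^{\lam}$ (restrict the second trivialization to $\infty$), so it realizes the graded Hecke correspondence at $0$ directly. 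The crucial point is Theorem~\ref{loc-glob-new}: $\calC^{\lam}$ is a single $(G\times G)$-orbit, isomorphic to $(G\times G)/P_{\lam}'$ where $P_{\lam}'=\{(p,i_{\lam}\pi_{\lam}(p)):p\in P_{\lam}\}$, and under this identification the two projections become exactly the quotients $\gamma$, $\delta$ defining the intertwiner of Section~\ref{inter}. Transitivity (proved by reducing to the modular description of transversal slices to $G(\OO)$-orbits in the affine Grassmannian) immediately gives the correct multiplicities and kills any lower-stratum contribution, replacing your MV-cycle-type estimate and bookkeeping by a one-line orbit argument. That is the device your proposal is missing.
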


{\em Step 4.} According to Theorem \ref{main-radon} it is enough to show that given two associate parabolic subgroups $P$ and $Q$ of $G$
\footnote{I.e. parabolic subgroups which have a common Levi subgroup}
 with unipotent radicals $U_P,U_Q$ the standard intertwining operator $I_{P,Q}:F(G/U_P(\kk))\to F(G/U_Q(\kk))$ is an isomorphism. This is proven in \cite{DD} (cf. also Theorem 2.4 in \cite{HoLe}).

\subsection{Acknowledgements} We thank R.~Bezrukavnikov and M.~Finkelberg for very useful discussions.

\section{Bundles on $\PP^1$ with trivializations and proof of (\ref{formula-global})}

\subsection{The stack of $G$-bundles on $\PP^1$}
We denote by $\Bun_G(\PP^1)$ the moduli stack of $G$-bundles on $\PP^1$. It is well-known that
$\Bun_G(\PP^1)(\kk)$ naturally identifies with $\Lam^+$. This identification can be described as follows.
Let $\lam\in \Lam$; this is homomorphism from $\GG_m$ to the (chosen) maximal torus $T$ in $G$; thus $\lam$ defines a homomorphism $\GG_m\to G$. Applying it to the line
bundle $\calO(1)$ on $\PP^1$ we get a $G$-bundle $\calO(\lam)$ whose isomorphism class only depends on the orbit of $\lam$ under the Weyl group $W$, and the corresponding map $\Lam^+\to \Bun_G(\PP^1)(\kk)$ is a bijection.
Note that for any $z\in \PP^1(k)$ we can write canonically $\calO(1)=\calO(z)$. Thus the bundle $\calO(\lam)$ acquires a canonical trivialization away from $z$. In what follows we shall choose such a $z$ so that $z\in \kk^{\x}$.
Some of our construction will slightly depend on this choice (in principle, one can choose $z=1$ but there is nothing special about this choice).

Geometrically this defines a stratification of $\Bun_G(\PP^1)$ -- we denote by $\Bun_G(\PP^1)^{\lam}$ the corresponding locally closed
substack of $\Bun_G(\PP^1)$. Denote by $\overline{\Bun_G(\PP^1)^{\lam}}$ its closure. Then it is easy to see that
$\Bun_G(\PP^1)^{\mu}\subset \overline{\Bun_G(\PP^1)^{\lam}}$ if and only if $\lam\leq \mu$.
We set
$$
\Bun_G(\PP^1)^{\leq \lam}=\bigcup\limits_{\mu\leq \lam} \Bun_G(\PP^1)^{\mu}.
$$
This is an open substack of $\Bun_G(\PP^1)$.
\subsection{Parabolic subgroups and Harder-Narasimhan filtrations}
Recall
that we have chosen a Cartan subgroup $T$ of $G$ and a Borel subgroup $B$ containing it.
Every $\lam\in \Lam$ defines a parabolic subgroup $P_{\lam}$ in $G$ (its unipotent radical is generated by root vectors corresponding to roots $\alpha$ such that $\langle \lam,\alpha\rangle >0$). We have also the opposite parabolic $P_{\lam^-}$ which have a common Levi subgroup $M_{\lam}$. We denote by
$\pi_{\lam}:P_{\lam}\to M_{\lam}$, $i_{\lam}:M_{\lam}\to P_{\lam}$ the natural maps (similarly for $\pi_{\lam}^-$ and $i_{\lam}^-$).

By construction every $\calO(\lam)$ is endowed with a $T$-structure. However, this structure is not canonical in the sense that it is not invariant under the group of automorphisms of $\calO(\lam)$ -- in particular, it is not  well-defined on a $G$-bundle abstractly isomorphic to $\calO(\lam)$. However, it is easy to see that the induced $P_{\lam}$-structure
is invariant under the automorphisms of $\calO(\lam)$. For $G=\GL(N)$ this is exactly the Harder-Narasimhan filtration, so for general $G$ we shall also sometimes use this term for the $P_{\lam}$ structure on $G$-bundle isomorphic to $\calO(\lam)$.

\subsection{The main stack}
Our main geometric object in this Section is the stack $\bpo$ classifying $G$-bundles on $\PP^1$ with trivializations at $0$ and $\infty$.
We denote by $\bpo^{\leq \lam}$ and $\bpo^{\lam}$ the corresponding (open and locally closed) substacks of $\bpo$ which are preimages respectively of $\Bun_G(\PP^1)^{\leq \lam}$ and of $\Bun_G(\PP^1)^{\lam}$. As before we set $V$ to be the space of $F$-valued functions on $\bpo(\kk)$ with finite support.
We also set $V_{\leq \lam}$ (resp. $V_{\lam}$) to be the space of $F$-valued functions on $\bpo^{\leq \lam}(\kk)$ (resp. on
$\bpo^{\lam}(\kk)$). Then $V_{\leq \lam}$ define a filtration on $V$ with $V_{\lam}$'s being the corresponding associated graded quotients.

\begin{theorem}\label{global}
There exists a map $\eta_{\lam}:\bpo^{\lam}\to G/U_{\lam}\underset{M_{\lam}}\times G/U_{\lam}$ such that:
\begin{enumerate}[label=(\roman*)]
\item
$\eta_{\lam}$ is surjective on $\kk$-points
\item The fibers of $\eta_{\lam}$ are classifying stacks of some unipotent group.
\end{enumerate}
In particular (i) and (ii) imply that $\eta_{\lam}$ is an isomorphism on the level of $\kk$-points.
\end{theorem}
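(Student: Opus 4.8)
The plan is to describe the map $\eta_\lambda$ explicitly on $\kk$-points first, in terms of the Harder–Narasimhan picture, and then check it globalizes to a morphism of stacks with the stated fiber structure. Fix $\lambda \in \Lambda^+$. A $\kk$-point of $\bpo^\lambda$ is a $G$-bundle $\mcP$ on $\PP^1$ together with trivializations $\phi_0,\phi_\infty$ at $0$ and $\infty$, such that $\mcP \simeq \calO(\lambda)$. As recalled in the preceding subsection, such a bundle carries a canonical $P_\lambda$-structure $\mcP_{P_\lambda}$ (the Harder–Narasimhan reduction), well-defined independently of the chosen isomorphism with $\calO(\lambda)$. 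The first step is: restrict $\mcP_{P_\lambda}$ to the points $0$ and $\infty$, and compare with the given trivializations $\phi_0,\phi_\infty$. Over each of $0$ and $\infty$ the fiber $\mcP$ is identified with $G$ via the trivialization, and the $P_\lambda$-reduction picks out a coset in $G/P_\lambda$; refining $P_\lambda$ to its unipotent radical $U_\lambda$ using the fact that $\mcP_{P_\lambda}$ also induces an $M_\lambda$-bundle and this $M_\lambda$-bundle is canonically trivial on $\PP^1$ up to the $M_\lambda(\kk)$ of its global automorphisms — one obtains from $(\mcP,\phi_0)$ a point of $G/U_\lambda$ and from $(\mcP,\phi_\infty)$ a point of $G/U_\lambda$, well-defined up to a common $M_\lambda(\kk)$. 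This produces the desired point of $(G/U_\lambda \underset{M_\lambda}{\times} G/U_\lambda)(\kk)$, and running the same recipe in families gives the morphism $\eta_\lambda$ of stacks.

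For (i), surjectivity on $\kk$-points, the idea is to go backwards: given a pair of cosets $(g_0 U_\lambda, g_\infty U_\lambda)$ (well-defined modulo diagonal $M_\lambda$), build a bundle by gluing the trivial $P_\lambda$-bundle on $\PP^1 \setminus \{z\}$ (where $z \in \kk^\times$ is the auxiliary point from the previous subsection, with $\calO(1) = \calO(z)$) to $\calO(\lambda)$'s canonical $P_\lambda$-structure near $z$, then changing the trivializations at $0$ and $\infty$ by $g_0$ and $g_\infty$; since $0,\infty \neq z$ this is legitimate and clearly maps to the prescribed pair. One should note here that the Birkhoff/Bruhat-type decomposition $G = U_\lambda M_\lambda U_\lambda^-$ near $z$ is what makes the induced $M_\lambda$-bundle trivial and lets the $P_\lambda$-structure be recorded honestly by a $G/U_\lambda$-point rather than only a $G/P_\lambda$-point.

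For (ii), I would argue that the fiber of $\eta_\lambda$ over a chosen $\kk$-point records exactly the data of extending the $P_\lambda$-reduction of $\mcP$ away from $\{0,\infty,z\}$, or equivalently the group of automorphisms of $\calO(\lambda)$ that act trivially on the $P_\lambda$-structure and on the fibers at $0$ and $\infty$. The automorphism group scheme $\underline{\Aut}(\calO(\lambda))$ is a (non-reductive, in general) group whose reductive quotient is $M_\lambda$; the kernel of the map to $M_\lambda$ is unipotent, built from $H^0(\PP^1, \mcO(\text{non-negative twists}))$ of the nilpotent part of $\mathrm{ad}\,\calO(\lambda)$. Since $\eta_\lambda$ remembers the $M_\lambda$-part (through the $G/U_\lambda \times G/U_\lambda$ target) and the trivializations at $0,\infty$ rigidify, the residual automorphisms form precisely this unipotent group $H^0$, so each fiber is its classifying stack. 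Finally, (i) together with (ii) gives that $\eta_\lambda$ induces a bijection on $\kk$-points: the groupoid $\bpo^\lambda(\kk)$ maps onto the set $(G/U_\lambda \underset{M_\lambda}{\times} G/U_\lambda)(\kk)$ with each fiber the groupoid $BN(\kk)$ of a unipotent $N$, which over a finite field has a single isomorphism class (as $H^1(\kk,N) = 0$ for unipotent $N$), and no stack-theoretic automorphisms are seen at the level of the function space $V_\lambda$.

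The main obstacle I expect is (ii): carefully identifying the fiber automorphism group as a \emph{unipotent} group, uniformly in $\lambda$, and checking it does not interact with the trivializations at $0$ and $\infty$ in a way that would leave extra torus or Levi directions. Concretely one must analyze $H^0(\PP^1, \mathrm{Lie}\,U_\lambda\text{-bundle associated to }\calO(\lambda))$ and see that exactly the $M_\lambda$-directions are "used up" by the target of $\eta_\lambda$ and the two trivializations. The rest — writing $\eta_\lambda$ as an honest morphism of algebraic stacks, and surjectivity via explicit gluing — should be routine.
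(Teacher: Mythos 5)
Your proposal follows essentially the same route as the paper: define $\eta_\lambda$ via the Harder--Narasimhan $P_\lambda$-reduction restricted to $0$ and $\infty$, use the auxiliary point $z$ to identify the induced $M_\lambda$-torsors, prove surjectivity by producing a point over each target point using $\calO(\lambda)$ with twisted trivializations (which is the paper's transitivity of the $G\times G$-action in disguise), and identify the fiber with the classifying stack of the kernel of $\Aut(\calO(\lambda))\to G\times G$, which is unipotent because it sits in the positive part of $H^0(\PP^1,\mathrm{ad}\,\calO(\lambda))$.

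One imprecision worth fixing: you say the induced $M_\lambda$-bundle is ``canonically trivial on $\PP^1$ up to $M_\lambda(\kk)$.'' That bundle is not trivial on $\PP^1$ (for $\GL(N)$ it is $\bigoplus_i \calO(\lambda_i)$ with its block structure). What is true, and what the paper actually uses, is that the choice of $z\in\kk^\times$ gives a canonical identification of the \emph{fibers} of this $M_\lambda$-bundle at $0$ and at $\infty$: each graded piece is $\calL^{\otimes k}$ for a line bundle $\calL$, and $\calL\simeq\calO(d\cdot z)$ canonically up to scalar, which identifies $\calL_0$ with $\calL_\infty$. Equivalently, the $M_\lambda$-bundle is canonically trivialized on $\PP^1\setminus\{z\}$, well-defined up to a single global $M_\lambda(\kk)$ acting on both fibers simultaneously. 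This is exactly what feeds the ``isomorphism of $M_\lambda$-torsors'' needed in the moduli description of $G/U_\lambda\underset{M_\lambda}{\times}G/U_\lambda$. Your conclusion (a pair of $G/U_\lambda$-points well-defined up to common $M_\lambda(\kk)$) is correct, but the stated mechanism should be corrected as above. The rest of the argument, in particular the identification $\bpo^\lambda\simeq (G\times G)/\Aut(\calO(\lambda))$ and the unipotence of $\ker(\Aut(\calO(\lambda))\to G\times G)$, matches the paper.
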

\begin{corollary}
$V_{\lam}\simeq F(G/U_{\lam}\underset{M_{\lam}}\times G/U_{\lam}(\kk))$ (this is the statement of (\ref{formula-global})).
\end{corollary}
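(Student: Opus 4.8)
The plan is to read the corollary off Theorem \ref{global} essentially for free. By definition $V_\lam = F(\bpo^\lam(\kk))$ (the space of $F$-valued functions on the set of isomorphism classes $\bpo^\lam(\kk)$), so it is enough to produce a canonical bijection of sets
$$
\bpo^\lam(\kk)\ \xrightarrow{\ \sim\ }\ \bigl(G/U_{\lam}\underset{M_{\lam}}\times G/U_{\lam}\bigr)(\kk)
$$
and then to pull back functions along it.

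First I would record why the map $\eta_{\lam}$ of Theorem \ref{global} is a bijection on $\kk$-points in the strong (stacky) sense, i.e. induces a bijection between sets of isomorphism classes of the groupoids of $\kk$-points. Surjectivity is part (i). For injectivity, fix a $\kk$-point $\xi$ of the target; by part (ii) the fiber $\eta_{\lam}^{-1}(\xi)$ is the classifying stack $BU_{\xi}$ of some unipotent group $U_{\xi}$ over $\kk$. Since $\kk$ is finite and $U_{\xi}$ is unipotent (an iterated extension of copies of $\mathbb{G}_a$), every $U_{\xi}$-torsor over $\kk$ is trivial — equivalently $H^1(\kk,U_{\xi})=0$, which one gets from Lang's theorem — so the groupoid $(BU_{\xi})(\kk)$ has exactly one isomorphism class. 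Hence over each $\kk$-point of the target there is exactly one $\kk$-point of $\bpo^\lam$, and together with (i) this shows $\eta_{\lam}$ is a bijection on $\kk$-points. (This is precisely the conclusion already stated at the end of Theorem \ref{global}.)

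Next, pullback along $\eta_{\lam}$ gives an $F$-linear map
$$
\eta_{\lam}^{*}\colon F\bigl((G/U_{\lam}\underset{M_{\lam}}\times G/U_{\lam})(\kk)\bigr)\ \longrightarrow\ F(\bpo^\lam(\kk)) = V_{\lam},
$$
and since $\eta_{\lam}$ is a bijection on $\kk$-points this is an isomorphism of $F$-vector spaces, which is exactly (\ref{formula-global}). I would add the harmless remark that $G/U_{\lam}\underset{M_{\lam}}\times G/U_{\lam}$ is of finite type over $\kk$, hence has finitely many $\kk$-points; transporting this through the bijection shows $\bpo^\lam(\kk)$ is finite, so the "finite support" clause in the definition of $V$ is automatically satisfied on each graded piece $V_{\lam}$ and there is no ambiguity in the meaning of $F(\bpo^\lam(\kk))$.

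There is no real obstacle in this deduction: all the geometric content sits in Theorem \ref{global}. The only point requiring a moment's care is the stacky one — ensuring that "$\eta_{\lam}$ is an isomorphism on $\kk$-points" genuinely produces a bijection of isomorphism-class sets rather than merely a map that is an isomorphism after taking some coarse space — and this is where the vanishing of $H^1(\kk,U)$ for unipotent $U$ over the finite field $\kk$ is used. Everything else is formal pullback of functions along a bijection of finite sets.
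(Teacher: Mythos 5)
Your proposal is correct and matches the paper's logic: the corollary is obtained by observing that parts (i) and (ii) of Theorem \ref{global} force $\eta_{\lam}$ to be a bijection on (isomorphism classes of) $\kk$-points, whence pullback of functions gives the isomorphism; the vanishing of $H^1(\kk,U)$ for the unipotent fibers, which you spell out via Lang's theorem, is exactly the point the paper leaves implicit in the sentence ``(i) and (ii) imply that $\eta_{\lam}$ is an isomorphism on the level of $\kk$-points.'' The only difference is one of bookkeeping: the paper's proof environment attached to the corollary actually carries out the construction of $\eta_{\lam}$ and the verification of (i) and (ii), whereas you take Theorem \ref{global} as given, which is legitimate since the statement is presented as its corollary.
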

\begin{proof}
Let us begin by observing that the variety $G/U_{\lam}\underset{M_{\lam}}\times G/U_{\lam}$ is the moduli space of the following data: two $P_{\lam}$-structures on the trivial $G$-torsor together with an isomorphism between the corresponding $M_{\lam}$-torsors.

Now any bundle on $\PP^1$ with isomorphism type $\lam$ is endowed with a canonical $P_{\lam}$-structure (for $G=GL(N)$ this is the Harder-Narasimhan filtration). By looking at the image of this structure at $0$ and $\infty$ we get a map
from $\bpo^{\lam}$ to $G/P_{\lam}\times G/P_{\lam}$. In addition, let us recall that we have chosen some $z\in \PP^1(\kk)\backslash \{ 0,\infty\}$. Then we claim that the corresponding $M_{\lam}$-torsors are canonically isomorphic. Indeed, it is enough to check it for $G=\GL(N)$. In this case, it follows from the fact the every graded piece of the associated graded of any rank $N$ vector bundle with respect to its Harder-Narasimhan filtration is
isomorphic to $\calL^k$ for some $\calL\in \Pic(\PP^1)$, $k\in \ZZ_{>0}$. Now we have unique up to constant isomorphism
$\calL\simeq \calO(d\cdot z)$ where $d=\deg(\calL)$, which identifies the fibers of $\calL$ at $0$ and $\infty$.
Thus we get a map $\eta_{\lam}:\bpo^{\lam}\to G/U_{\lam}\underset{M_{\lam}}\times G/U_{\lam}$.

We now need to prove (1) and (2). To prove (1) let us observe that $\bpo^{\lam}$ is acted on transitively by the group $G\times G$, and the same is true for $G/U_{\lam}\underset{M_{\lam}}\times G/U_{\lam}(\kk)$; moreover, this action is transitive on the latter.

To check (2) let us describe the stack $\bpo^{\lam}$ explicitly. Let $\calF$ be a $G$-bundle of type $\lam$ with trivializations at $0$ and $\infty$. Let $\Aut(\calF)$ be the group of automorphisms of $\calF$ (just as a bundle -- i.e.
the automorphisms are not supposed to preserve the trivializations).
Looking at the action of automorphisms at $0$ and $\infty$ we get a map $\Aut(\calF)\to G\times G$.
The stack $\bpo^{\lambda}$ is equivalent to $G\times G/\Aut(\calF)$.\footnote{The map $\Aut(\calF)\to G\times G$ is in general not injective, thus the quotient is indeed a stack.}
Let us determine the image of $\Aut(\calF)$ in $G\times G$. For this it is enough to take any $\calF$ as above, and we take
$\calF=\calO(\lambda)$ with trivializations at $0$ and $\infty$ coming from the trivialization of $\calO(\lambda)$ on $\PP^1\backslash\{z\}$. Then the image of $\Aut(\calF)$ in $G\times G$ is easily seen to be the subgroup of $P_{\lam}\times P_{\lam}$ consisting of pairs $(p,p')$ whose images in $M_{\lam}$ coincide. Thus the quotient of $G\times G$ by the image of $\Aut(\calF)$ is precisely $G/U_{\lam}\underset{M_{\lam}}\times G/U_{\lam}$. It remains to note that the kernel of the map
$\Aut(\calF)\to G\times G$ is unipotent.
\end{proof}

\section{Hecke stack for the 1st congruence subgroup}

\subsection{The Hecke stack in the spherical case}
Let $\calH_{\OO}$ be the stack classifying triples $(\calF _1,\calF_2,\kap)$

where

1) $\calF_i$ is a $G$-bundle on the formal disc $\calD=\Spec(\OO)$.

2)  $\kap$ is an isomorphism $\calF_1|_{\calD^*}\to \calF_2|_{\calD^*}$ where $\calD^*=\Spec(\KK)$.

This stack can be identified with the double quotient $G(\OO)\backslash G(\KK)/G(\OO)$.
As before we have the natural bijection $\calH_{\OO}(\kk)=\Lam^+$. Here every $\lam\in\Lam^+$ corresponds
to the element $t^{\lam}=\lam(t)\in T(\KK)\subset G (\KK)$. In fact, the stack $\calH_{\OO}$ endowed with a natural stratification
$$
\calH_{\OO}=\bigsqcup\limits_{\lam\in\Lam^+}\calH^{\lam},
$$
where $\calH_{\OO}^{\lam}$ is the (reduced) locally closed substack of $\calH_{\OO}$ whose only $\kk$-point is given by the above construction. The closure $\calH^{\leq \lam}_{\OO}$ of $\calH_{\OO}^{\lam}$ is equal to the union of all $\calH_{\OO}^{\mu}$ with $\mu\leq \lam$.
\subsection{The Hecke stack for the 1st congruence subgroup}
Let $\calH$ denote the stack of 5-tuples $(\calF_1,\calF_2, \phi_1,\phi_2,\kap)$ where

1) $\calF_i$ is a $G$-bundle on the formal disc $\calD=\Spec(\OO)$ where $\OO=\kk[[t]]$.

2) $\phi_i$ is the trivialization of the fiber of $\calF_i$ at $0$.

3) $\kap$ is an isomorphism $\calF_1|_{\calD^*}\to \calF_2|_{\calD^*}$ where $\calD^*=\Spec(\KK)$ where $\KK=\kk((t))$.

\noindent
Let $\calH^{\lam}$ denote the preimage of $\calH^{\lam}_{\OO}$ under the natural map $\calH\to \calH_{\OO}$; this is a locally closed substack of $\calH$. Similarly, we denote by $\calH^{\leq \lam}$ the preimage of $\calH^{\leq\lam}_{\OO}$.

\subsection{The algebra $A$ and its filtration}
Let $A$ denote the space of finitely supported $F$-valued functions on $\calH(\kk)$. This is an algebra with respect to convolution.
We let $A_{\leq \lam}$ be the space of functions on $\calH^{\leq \lam}$. It is easy to see that $A_{\lam}\cdot A_{\mu}\subset A_{\leq \lam+\mu}$; thus $A_{\leq \lam}$ is a filtration on $A$ by dominant coweights.
The associated graded algebra is the direct sum of all the $A_{\lam}$ where $A_{\lam}$ is the space of $F$-valued functions
on $\calH^{\lam}$.

We would like to describe the space $A_{\lam}$. For this we are going to need the following geometric description of the stack $\calH^{\lam}$.

\begin{theorem}\label{local}
There is a map $\calH^{\lam}\to G/U_{\lam}\underset{M_{\lam}}\times G/U_{\lam}^-$ which is surjective on $\kk$-points and whose fibers are classifying spaces of some pro-unipotent groups (in particular, this map is bijective of $\kk$-points).
\end{theorem}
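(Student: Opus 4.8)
The plan is to reduce the statement to an explicit computation with the first congruence subgroup, in parallel with the proof of the Corollary to Theorem~\ref{global}. I would first set up two concrete models. Unwinding the definition of $\calH$ and choosing, for each $5$-tuple $(\calF_1,\calF_2,\phi_1,\phi_2,\kap)$, a trivialization of $\calF_i$ over $\calD$ lifting $\phi_i$ (these form a $K_1$-torsor), one identifies $\calH^{\lam}(\kk)$ with $K_1\backslash G(\OO)t^{\lam}G(\OO)/K_1$ and finds that the automorphism group in $\calH^{\lam}$ of the $5$-tuple attached to $g\in G(\OO)t^{\lam}G(\OO)$ is $K_1\cap g^{-1}K_1g$. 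On the other side, exactly as in the proof of the Corollary to Theorem~\ref{global}, the variety $G/U_{\lam}\underset{M_{\lam}}\times G/U_{\lam}^-$ is the moduli space of a $P_{\lam}$-structure together with a $P_{\lam}^-$-structure on the trivial $G$-torsor plus an isomorphism of the induced $M_{\lam}$-torsors, so its $\kk$-points are $(G(\kk)\times G(\kk))/\wt S$ with $\wt S=\{(p,p^-)\in P_{\lam}\times P_{\lam}^-:\pi_{\lam}(p)=\pi_{\lam}^-(p^-)\}$. The desired map then sends the class of $g=h_1t^{\lam}h_2$ (with $h_i\in G(\OO)$) to the class of $(\ov{h_2}^{-1},\ov{h_1})$, where the bar denotes reduction modulo $t$; geometrically, this records that a modification of type $\lam$ endows the fibre of $\calF_2$ at $0$ with a canonical $P_{\lam}$-structure, the fibre of $\calF_1$ at $0$ with a canonical $P_{\lam}^-$-structure, and the two associated $M_{\lam}$-torsors with a canonical identification (for $G=\GL(N)$ this comes from the elementary-divisor filtration of the pair of lattices, and the general case reduces to it; one then transports everything to the trivial torsor using $\phi_1,\phi_2$).

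The single computational input underlying all the verifications is that conjugation by $t^{\lam}$ carries $U_{\alpha}(\OO)$ to $U_{\alpha}(t^{\langle\lam,\alpha\rangle}\OO)$. From this I would deduce: the group $\Sigma:=G(\OO)\cap t^{\lam}G(\OO)t^{-\lam}$ has reduction $\ov\Sigma=P_{\lam}^-(\kk)$; the map $s\mapsto\ov{t^{-\lam}st^{\lam}}$ carries $\Sigma$ into $P_{\lam}(\kk)$ with the same $M_{\lam}$-component as $\ov s$; hence $(\ov s,\ov{t^{-\lam}st^{\lam}})$ lies in $\wt S$ (up to reordering the factors). Since replacing the factorization $g=h_1t^{\lam}h_2$ by another amounts precisely to replacing $(h_1,h_2)$ by $(h_1s,\,t^{-\lam}s^{-1}t^{\lam}h_2)$ for some $s\in\Sigma$, and since reduction modulo $t$ kills $K_1$, these facts show that the class of $(\ov{h_2}^{-1},\ov{h_1})$ in $G/U_{\lam}\underset{M_{\lam}}\times G/U_{\lam}^-$ is a well-defined invariant of the point of $\calH^{\lam}(\kk)$.

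To prove the map is bijective on $\kk$-points I would identify both sides with $(G(\kk)\times G(\kk))/\wt S$. Since $K_1$ is normal in $G(\OO)$ and $G(\OO)t^{\lam}G(\OO)=(G(\OO)\times G(\OO))/\mathrm{Stab}(t^{\lam})$ with $\mathrm{Stab}(t^{\lam})=\{(s,t^{-\lam}st^{\lam}):s\in\Sigma\}$, we get $\calH^{\lam}(\kk)=(G(\kk)\times G(\kk))/\ov{\mathrm{Stab}(t^{\lam})}$, so it suffices to show the reduction map $\Sigma\to\wt S$ is surjective: surjectivity onto $M_{\lam}$ and onto the $U_{\lam}^-$-direction follows from $\ov\Sigma=P_{\lam}^-(\kk)$, while surjectivity onto the $U_{\lam}$-direction follows because for $\langle\lam,\alpha\rangle\ge 1$ one has $U_{\alpha}(t^{\langle\lam,\alpha\rangle}\OO)\subset\Sigma$ and $s\mapsto\ov{t^{-\lam}st^{\lam}}$ maps this subgroup onto $U_{\alpha}(\kk)$. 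Hence $\ov{\mathrm{Stab}(t^{\lam})}=\wt S$, and the resulting identification $\calH^{\lam}(\kk)=(G(\kk)\times G(\kk))/\wt S$ is readily checked to agree with the map above, so the map is a bijection on $\kk$-points (alternatively: the map is $G\times G$-equivariant, visibly surjective on $\kk$-points by lifting, and the count just given forces injectivity). For the fibres as stacks: by $G\times G$-equivariance it is enough to treat the base point, whose fibre is $B(K_1\cap t^{-\lam}K_1t^{\lam})$; the group $K_1\cap t^{-\lam}K_1t^{\lam}$ is generated by $T(1+t\OO)$ and the $U_{\alpha}(t^{a_{\alpha}}\OO)$ with $a_{\alpha}\ge 1$, hence is a closed subgroup of the pro-unipotent group $K_1$ and is pro-unipotent; a general fibre is a $G(\KK)$-conjugate of it, hence also pro-unipotent. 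Since $H^1(\kk,U)$ vanishes for pro-unipotent $U$ (Lang's theorem), this yields the asserted bijectivity on $\kk$-points.

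I expect the main obstacle to be the two surjectivity/rigidity assertions above, i.e.\ that a modification of type $\lam$ actually pins down the full isomorphism of $M_{\lam}$-torsors (and not merely the two parabolic reductions), and that, once this datum is fixed, the remaining moduli of the $5$-tuple are exactly as rigid as $K_1$, so that the fibres acquire no extra non-unipotent automorphisms. Both amount to careful bookkeeping with the root-subgroup decompositions of $\Sigma$ and of $K_1\cap t^{-\lam}K_1t^{\lam}$, together with a precise matching between the geometric ($5$-tuple) picture and the group-theoretic ($K_1$-double-coset) picture; this is the only genuinely computational part of the argument, everything else being formal.
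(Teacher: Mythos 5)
Your proposal is essentially the paper's argument, rewritten in explicit double‑coset language. The paper defines the map intrinsically (for any test scheme) by first establishing, for $\GL(N)$, the Jantzen ("elementary‑divisor") filtrations on the two fibres $E,E'$ at $0$ together with the canonical isomorphism $\gr_i E\simeq\gr_{-i}E'$, and then transporting this to general $G$ by functoriality/compatibility with tensor products; you instead fix a representative $g=h_1 t^{\lam}h_2$ of the $K_1$‑double coset and show directly that $(\ov{h_2}^{-1},\ov{h_1})$ is independent of the factorization. Both routes hinge on the identical root‑subgroup computation: that $\Sigma=G(\OO)\cap t^{\lam}G(\OO)t^{-\lam}$ reduces mod $t$ onto one parabolic, conjugation by $t^{\lam}$ followed by reduction lands in the opposite parabolic with the same $M_{\lam}$‑component, and the kernel $K_1\cap t^{-\lam}K_1t^{\lam}$ of $A_{\lam}\to G\times G$ is pro‑unipotent. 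Likewise, both use transitivity of $G\times G$ for surjectivity and vanishing of $H^1$ of (pro‑)unipotent groups (Lang) to pass from "fibres are $B(\text{pro-unipotent})$" to bijectivity on $\kk$‑points. Two small remarks: (a) your assignment of $P_{\lam}$ to $\calF_2$ and $P_{\lam}^-$ to $\calF_1$ is opposite to the paper's convention, but this is just a matter of whether one works with $\kap$ or $\kap^{-1}$ and does not affect the statement; (b) the paper's Jantzen‑filtration phrasing gives the map at the level of families automatically, which your coset‑level construction only gives at the level of $\kk$‑points — although you indicate the fix (the elementary‑divisor picture), it is worth making this explicit if the stack‑level map, rather than just the map on $\kk$‑points, is actually used downstream.
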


To construct this map we need a digression on Jantzen filtrations.
\subsection{Jantzen filtrations}
Let $\calE,\calE'$ be two finitely generated torsion-free $\OO$-modules endowed with an isomorphism
$\kap:\calE\underset{\OO}\otimes \KK \to \calE'\underset{\OO}\otimes \KK$.
Let $E,E'$ be the fibers of $\calE,\calE'$ at $0$.

Set $E_i$ be the set of all $v\in E$ such that for some (equivalently for any) $s\in \calE$ with $s(0)=v$ we have $t^{-i}\kap(s)\in \calE'$. Similarly we define $E'_i$ by replacing $\kap$ by $\kap^{-1}$.
Then it is clear that

(1) $E_i$ is an increasing filtration on $E$, $E_i'$ is an increasing filtration on $E'$ (we are going to call them
Jantzen filtrations).

(2) $\kap$ establishes a canonical isomorphism between $\gr_i(E)$ and $\gr_{-i}(E')$.

\noindent
Let $N=\dim(E)=\dim(E')$. Then we can view $\calE$ and $\calE'$ as rank $N$ vector bundles on
$\calD$, and $\kap$ defines a point of the double quotient stack $\GL(N,\OO)\backslash \GL(N,\KK)/\GL(N,\OO)$.
On the other hand, the above Jantzen fitrations define a reduction of $V$ and $V'$ to some parabolic subgroups
$P,P'$ of $\GL(N)$. Moreover, (2) above implies that the type of $P'$ is opposite to that of $P$.

Recall that $\kk$-points of $\GL(N,\OO)\backslash \GL(N,\KK)/\GL(N,\OO)$ are in one-to-one correspondence with $\lam\in\Lam^+$ (where $\Lam^+$ denotes the set of dominant coweights of $\GL(N)$).
\begin{lemma}
Assume that $(\calE,\calE',\kap)$ has type $\lam$. Then the Jantzen filtration on $E$ defines a reduction of $E$ to $P_{\lam}$. Similarly, the Jantzen filtration on $E'$ defines a reduction of $E'$ to $P_{\lam}^-$.
\end{lemma}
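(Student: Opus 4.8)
The plan is to reduce the statement to elementary‑divisor (Smith normal form) theory over the discrete valuation ring $\OO$. Since $(\calE,\calE',\kap)$ has type $\lam=(\lam_1\geq\cdots\geq\lam_N)\in\Lam^+$, we may choose an $\OO$‑basis $e_1,\dots,e_N$ of $\calE$ and an $\OO$‑basis $e'_1,\dots,e'_N$ of $\calE'$ with $\kap(e_j)=t^{\lam_j}e'_j$ for every $j$; write $\bar e_j\in E$ and $\bar e'_j\in E'$ for the reductions modulo $t$. Once such a normal form has been fixed, the argument becomes an explicit computation, and since the Jantzen filtrations are defined intrinsically there is nothing further to check about independence of these choices.

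The first step would be to compute the Jantzen filtration of $E$ in these coordinates. By definition $E_i$ is the image in $E=\calE/t\calE$ of those $s\in\calE$ with $t^{-i}\kap(s)\in\calE'$, that is, the image of $\calE\cap t^{i}\kap^{-1}(\calE')$. From $\kap^{-1}(e'_j)=t^{-\lam_j}e_j$ one gets $\calE\cap t^{i}\kap^{-1}(\calE')=\bigoplus_j t^{\max(0,\,i-\lam_j)}\OO\,e_j$, whose image in $E$ is the span of the $\bar e_j$ with $\lam_j\geq i$. Hence $E_i=\mathrm{span}_{\kk}(\bar e_j:\lam_j\geq i)$, so $\gr_i(E)=\mathrm{span}_{\kk}(\bar e_j:\lam_j=i)$, which is nonzero precisely when $i$ is one of the values $\lam_1,\dots,\lam_N$, with dimension the number of $j$ for which $\lam_j=i$.

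The second step would be to recognize this flag as a reduction to $P_\lam$. As $\lam$ is dominant, $\{E_i\}$ is exactly the standard flag in $E$ attached to the ordered basis $\bar e_1,\dots,\bar e_N$, with successive quotients of dimensions equal to the multiplicities of the distinct values of $\lam$. On the other hand, the description of $P_\lam$ recalled above — unipotent radical generated by the root subgroups of the roots $\alpha=\ep_k-\ep_l$ with $\langle\lam,\alpha\rangle=\lam_k-\lam_l>0$, and Levi $M_\lam$ the associated block‑diagonal subgroup — exhibits $P_\lam\subset\GL_N=\GL(E)$ as precisely the stabilizer of that flag. This gives the reduction of $E$ to $P_\lam$. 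For $E'$ one runs the same computation with the roles of $\calE,\calE'$ (and of $\kap,\kap^{-1}$) interchanged: here $E'_i$ is the image in $E'$ of $\calE'\cap t^{i}\kap(\calE)=\bigoplus_j t^{\max(0,\,i+\lam_j)}\OO\,e'_j$, so $E'_i=\mathrm{span}_{\kk}(\bar e'_j:\lam_j\leq-i)$; the associated flag on $E'$ is the one adapted to the reversed ordering of the basis, whose stabilizer is the opposite parabolic $P_\lam^-$. (This is also forced by property (2) above, which identifies $\gr_i(E)$ with $\gr_{-i}(E')$ and therefore says the two reductions must be to opposite parabolics sharing the Levi $M_\lam$.)

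No serious obstacle is expected: the content is elementary‑divisor theory plus routine bookkeeping. The one place that needs care is the translation between the root‑theoretic definition of $P_\lam$, $M_\lam$, $P_\lam^-$ and the flag‑stabilizer picture, together with keeping the dominant/antidominant conventions straight, so that the computation lands in $P_\lam$ on the $E$‑side and in $P_\lam^-$ (rather than $P_\lam$) on the $E'$‑side.
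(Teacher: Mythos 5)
Your proof is correct and takes essentially the same route as the paper's, which just says that it suffices to reduce to $\calE=\calE'=\OO^N$ with $\kap=\mathrm{diag}(t^{\lambda_1},\dots,t^{\lambda_N})$, and in that normal form "the lemma is obvious." You have simply written out the computation the paper leaves to the reader: the Smith normal form reduction, the explicit identification $E_i=\mathrm{span}(\bar e_j:\lambda_j\geq i)$ and $E'_i=\mathrm{span}(\bar e'_j:\lambda_j\leq -i)$, and the matching of the resulting flags with the root-theoretic descriptions of $P_\lambda$ and $P_\lambda^-$.
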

As before, to prove this lemma, it is enough to look at the case $\calE=\calE'=\OO^N$ and $\kap$ being the diagonal matrix with entries $(t^{n_1},\cdots,t^{n_N})$ where $\lam=(n_1,\cdots,n_N)$, in which case the above lemma is obvious.

Since Jantzen filtrations are completely canonical, they are functorial and are compatible with tensor products.
As a corollary, we get the following
\begin{lemma}\label{lem-loc}
Let $G$ be as before, and let $(\calF_1,\calF_2,\kap)$ be a $\kk$-point of the stack
$G(\OO)\backslash G(\KK)/G(\OO)$ of type $\lam$. Let $F_i$ be the fiber of $\calF_i$ at $0$. Then $F_1$ is endowed with a
$P_{\lam}$-structure, $F_2$ is endowed with a $P_{\lam}^-$-structure and the corresponding $M_{\lam}$-bundles are canonically isomorphic.
\end{lemma}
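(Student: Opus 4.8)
The plan is to deduce the statement for general $G$ from the already-established case $G=\GL(N)$ (the preceding lemma) via Tannakian formalism, using that the Jantzen filtration of the previous subsection is functorial and compatible with tensor products and duals. Recall that a $G$-bundle on $\calD=\Spec(\OO)$ is the same datum as an exact faithful tensor functor $V\mapsto\calF^V$ from $\Rep(G)$ to the category of finitely generated torsion-free (equivalently, free) $\OO$-modules, and that a reduction of the fiber $\calF|_0$ (a $G$-torsor over $\kk$) to the parabolic $P_\lam$ is the same datum as a system of $\ZZ$-filtrations of the spaces $\calF^V|_0$, one for each $V\in\Rep(G)$, functorial in $V$, exact, compatible with tensor products and duals, and of the type prescribed by $\lam$ (on each $V$ the jumps occur among the integers $\langle\mu,\lam\rangle$, for $\mu$ a weight of $V$); the associated graded of such a filtered tensor functor is then precisely the $M_\lam$-bundle underlying the reduction. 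So it is enough to produce from $(\calF_1,\calF_2,\kap)$ such a system of filtrations on $\{\calF_1^V|_0\}_V$ and on $\{\calF_2^V|_0\}_V$ and to identify their types.

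For each $V\in\Rep(G)$ the gluing $\kap$ induces an isomorphism $\kap^V\colon\calF_1^V|_{\calD^*}\to\calF_2^V|_{\calD^*}$, and the preceding lemma, applied to the rank-$\dim V$ vector bundles $(\calF_1^V,\calF_2^V,\kap^V)$, equips $F_1^V:=\calF_1^V|_0$ and $F_2^V:=\calF_2^V|_0$ with their Jantzen filtrations. First I would check that these vary tensor-functorially. Functoriality in $V$, the existence of natural maps $(E_1)_i\otimes(E_2)_j\to(E_1\otimes E_2)_{i+j}$, and a compatibility with duals are all immediate from the definition $E_i=\{v: t^{-i}\kap(\tilde v)\in\calE'\}$. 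To upgrade these into genuine identities — exactness of the Jantzen filtration, and strict multiplicativity on tensor products — I would pass to the elementary-divisor normal form of $\kap$: since $\OO$ is a discrete valuation ring there are $\OO$-bases of $\calE$ and $\calE'$ in which $\kap$ is diagonal with entries $t^{a_1},\dots,t^{a_N}$, and in such coordinates the Jantzen filtration of $E$ is just the filtration by spans of basis vectors grouped according to the $a_j$. Because the tensor product of two diagonalized gluings is again diagonal, and passing to duals negates the $a_j$, all the claimed identities follow at once; in particular the filtration is exact, and its passage to associated graded is the canonical isomorphism $\gr_i(E)\simeq\gr_{-i}(E')$ of item (2).

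With this in hand, the families $\{F_1^V\}_V$ and $\{F_2^V\}_V$ of Jantzen-filtered spaces define, by the Tannakian description above, reductions of $F_1$ and $F_2$ to parabolic subgroups of $G$. To see that these parabolics are $P_\lam$ and $P_\lam^-$ it suffices to evaluate everything at the model point $\calF^V=\OO\otimes_\kk V$ (all $V$), $\kap=t^\lam\in T(\KK)\subset G(\KK)$: there $\kap^V$ acts on the weight space $V_\mu$ by $t^{\langle\mu,\lam\rangle}$, so the Jantzen filtration of $\calF_1^V|_0=V$ is $\bigoplus_{\langle\mu,\lam\rangle\ge i}V_\mu$, which is exactly the filtration stabilized by $P_\lam$; replacing $\kap$ by $\kap^{-1}$ negates the exponents and yields $P_\lam^-$ for $F_2$. (This also matches the preceding lemma applied to a faithful representation, so no separate type computation is really required.) Finally, the canonical isomorphism of the two $M_\lam$-bundles is obtained by passing to associated graded: by the multiplicativity established in the previous paragraph the graded tensor functors $V\mapsto\gr(\calF_1^V|_0)$ and $V\mapsto\gr(\calF_2^V|_0)$ are canonically identified through the isomorphisms $\gr_i\simeq\gr_{-i}$ of item (2), and these functors are exactly the $M_\lam$-bundles underlying the two reductions. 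Since the Jantzen filtration and the isomorphism (2) are canonical, this identification is independent of the chosen normal forms.

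The hard part will be the bookkeeping hidden in the Tannakian step: one has to know that an exact, tensor- and dual-compatible $\ZZ$-filtration of the fiber functor of a $G$-bundle is the same thing as a reduction to a parabolic of the prescribed type (equivalently, that such a filtration is split by a cocharacter in the conjugacy class of $\lam$), together with the exactness of the Jantzen filtration needed to apply it. Both of these I would reduce, as above, to the diagonal normal form of $\kap$; the remainder is the formal translation between $\GL(N)$-bundles carrying compatible filtrations and $G$-bundles carrying parabolic reductions — which is precisely why, as the text already observes, the lemma is a corollary of the canonicity, functoriality and tensor-compatibility of the Jantzen construction.
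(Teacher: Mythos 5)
Your proof is correct and is essentially the detailed version of the paper's own one-line argument, which says precisely that the Jantzen filtration is canonical, hence functorial and tensor-compatible, and deduces the lemma as a corollary of the $\GL(N)$ case. You have simply unpacked the Tannakian bookkeeping (exactness, strict multiplicativity via diagonal normal form, the identification of the filtration type on the model point $\kap=t^\lam$) that the paper leaves implicit.
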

\subsection{End of the proof of Theorem \ref{local}}
It is clear that $G/U_{\lam}\underset{M_{\lam}}\times G/U_{\lam}^-$ is the moduli space of the following data:

1) A reduction of the trivial $G$-torsor to $P_{\lam}$.

2) A reduction of the trivial $G$-torsor to $P_{\lam}^-$.

3) An isomorphism between the corresponding $M_{\lam}$-torsors.

\noindent
Thus using Lemma \ref{lem-loc} we immediately get a map $\calH^{\lam}\to G/U_{\lam}\underset{M_{\lam}}\times G/U_{\lam}^-$.
We now need to prove that it is surjective on $\kk$-points and that the fibers are classifying stacks of some pro-unipotent group. The former follows again from the fact that $G(\kk)\times G(\kk)$ acts transitively on $G/U_{\lam}\underset{M_{\lam}}\times G/U_{\lam}^-(\kk)$. To prove the latter as before we note that $\calH^{\lam}$ is equivalent to
$G\times G/A_{\lam}$ where $A_{\lam}$ is the group of automorphisms of the triple $(\calF_1,\calF_2,\kap)$ where
$\calF_i$ are trivial bundles and $\kap=t^{\lam}$ (it maps to $G\times G$ by looking at its actions on the fibers of $\calF_i$ at $0$). In other words, $A_{\lam}$ is the subgroup of $G(\OO)$ consisting of elements $g\in G(\OO)$ such that
$\kap g \kap^{-1}\in G(\OO)$ and the map to $G\times G$ is given by $(g\mod t, \kap g \kap^{-1}\mod t)$.
It is easy to see that the kernel of the map $A_{\lam}\to G\times G$ is pro-unipotent and
the image of $A_{\lam}$ in $G\times G$ is equal to
$$
\{(p,p')\in P_{\lam}\times P_{\lam}^-|\ \pi_{\lam}(p)=\pi_{\lam}^-(p')\}.
$$
which finishes the proof.
\section{Hecke correspondences and proof of Theorem \ref{main-radon}}\label{inter}
The purpose of this Section is to prove Theorem \ref{main-radon}. As was explained in the Introduction this implies Theorem
\ref{loc-glob}.
\subsection{Intertwining operators}
We begin by recalling the definition and properties of the standrd intertwining operators for induced representations of the group $G(\kk)$.

Let $P$ and $P^-$ be any two opposite parabolics in $G$ -- i.e. two parabolics whose intersection $M$ is a Levi subgroup in both. Let $\gamma$ (resp. $\gamma_-$) be the natural map $G\to G/U$ (resp. $G\to G/U_-$).
Then we can view $G$ as a correspondence between $G/U_-$ and $G/U$ and we define
$\Phi_{P_-,P}: F(G/U^-(\kk))\to F(G/U(\kk))$ be the corresponding intertwining operator (a.k.a. "Radon transform").
This operators commutes with the natural action of $G(\kk)\times M(\kk)$ on both sides.
The following result is proven in \cite{DD} (cf. also \cite{HoLe}):
\begin{theorem}
The operator $\Phi_{P_-,P}$ is an isomorphism.
\end{theorem}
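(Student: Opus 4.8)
We recall the idea of the proof from \cite{DD} (see also \cite{HoLe}). Since $U$ and $U^-$ are iterated extensions of additive groups and $\dim U=\dim U^-$, we have $\#U(\kk)=\#U^-(\kk)=q^{\dim U}$, so $F(G/U(\kk))$ and $F(G/U^-(\kk))$ are finite dimensional of the same dimension $\#G(\kk)/q^{\dim U}$. It therefore suffices to prove that $\Phi_{P_-,P}$ is injective (equivalently, surjective).

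The plan is to factor $\Phi_{P_-,P}$ into ``relative rank one'' Radon transforms. Let $M$ be the common Levi of $P$ and $P^-$ and let $W_M:=N_G(M)/M$ be the relative Weyl group; it is a finite Coxeter group, the parabolics with Levi $M$ are indexed by $W_M$, and $P$, $P^-$ correspond to its two chambers at maximal distance, i.e.\ to $e$ and to the longest element $w_M$. Fix a reduced expression $w_M=s_1\cdots s_\ell$; it produces a gallery
\[
P^-=P^{(0)},\ P^{(1)},\ \dots,\ P^{(\ell)}=P
\]
of parabolic subgroups, all with Levi $M$, in which consecutive terms $P^{(k-1)},P^{(k)}$ are distinct and generate a parabolic $Q^{(k)}\supseteq P^{(k-1)},P^{(k)}$ whose Levi $L^{(k)}$ satisfies $N_{L^{(k)}}(M)/M\cong\ZZ/2$. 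Write $\Phi^{(k)}\colon F(G/U_{P^{(k-1)}}(\kk))\to F(G/U_{P^{(k)}}(\kk))$ for the associated Radon transforms; since the correspondence defining $\Phi^{(k)}$ lies over the diagonal of $G/Q^{(k)}\times G/Q^{(k)}$, each $\Phi^{(k)}$ is, fibrewise over $G/Q^{(k)}(\kk)$, the Radon transform attached to the two opposite parabolics of $L^{(k)}$ with Levi $M$. One then shows that
\[
\Phi_{P_-,P}=c\cdot\bigl(\Phi^{(\ell)}\circ\cdots\circ\Phi^{(1)}\bigr)
\]
for a nonzero scalar $c$ (a power of $q$); this is the transitivity, along a reduced word, of standard intertwining operators. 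Finally each $\Phi^{(k)}$ is an isomorphism by a direct computation in $L^{(k)}$ — the model being $G=\SL_2$, $M=T$, where $G/U\cong G/U^-\cong\kk^2\setminus\{0\}$ and the transform $f\mapsto\bigl(v\mapsto\sum_{\det(v\mid w)=1}f(w)\bigr)$ is checked to be invertible (e.g.\ by computing its square, or by relating it to the additive Fourier transform of the affine plane). Composing, $\Phi_{P_-,P}$ is an isomorphism.

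The main obstacle is the displayed factorization: establishing that composing the relative rank one Radon transforms along the reduced gallery reproduces $\Phi_{P_-,P}$ up to a nonzero scalar — i.e.\ the cocycle identity for these operators when lengths add in $W_M$, with the correct normalizations and no unexpected cancellation. This combinatorial bookkeeping, together with the rank one computation, is exactly the content of \cite{DD} and \cite[Thm.~2.4]{HoLe}. We note that for our application only the injectivity of $\Phi_{P_-,P}$ is needed, which follows as soon as one knows that the composition $\Phi^{(\ell)}\circ\cdots\circ\Phi^{(1)}$ is injective and factors through $\Phi_{P_-,P}$.
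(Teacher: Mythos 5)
The paper does not actually prove this theorem; it simply cites \cite{DD} and Theorem~2.4 of \cite{HoLe}. Your sketch is therefore not comparable to an argument in the paper, but it is a reasonable reconstruction of the proof strategy in those references: a dimension count reducing to injectivity, a factorization of the Radon transform into relative rank one intertwining operators along a gallery of parabolics sharing the Levi $M$, a direct check of the rank one $\SL_2$ case, and the cocycle (transitivity) property of intertwining operators when lengths add. All of this is in the right spirit, and the $\SL_2$ computation and dimension count are correct.

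There is, however, one concrete error that breaks the gallery step as you wrote it. The set of parabolic subgroups of $G$ having $M$ as a Levi is a torsor under the Weyl group $W(\Sigma_M)$ of the relative root system $\Sigma_M$ (the nonzero weights of $A_M:=Z(M)^{\circ}$ on $\mathfrak g/\mathfrak m$), and the natural injection $N_G(M)/M \hookrightarrow W(\Sigma_M)$ is in general proper; so it is incorrect to set $W_M:=N_G(M)/M$ and to claim the parabolics are indexed by it. Concretely, take $G=\GL_3$ and $M=\GL_2\times\GL_1$: then $N_G(M)=M$, so your $W_M$ is trivial and the reduced word for $w_M$ is empty, yet there are two parabolics with Levi $M$ (the upper and lower $(2,1)$-block parabolics, which are associate but not $G$-conjugate), $\Sigma_M$ is of type $A_1$, and $W(\Sigma_M)\cong\ZZ/2\ZZ$. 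With $W_M$ trivial your recipe produces no gallery at all, while $P\neq P^-$. Replacing $W_M$ by $W(\Sigma_M)$ (and taking ``adjacent'' to mean that $P^{(k-1)},P^{(k)}$ generate a parabolic whose relative root system over $M$ has rank one, rather than the condition on $N_{L^{(k)}}(M)/M$) repairs the construction, after which the remainder of your sketch goes through and agrees with the arguments of \cite{DD} and \cite{HoLe}.
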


%-----------------------------------------------------------------------------------------------------------------------
\subsection{The map $\act$ and the stack $\calC$}
The algebra $A\otimes A$ acts naturally on the space $V$ by convolution at $0$ and $\infty$; as in subsection \ref{sketch} we denote these actions by $a\underset{0}\star v$ and $a\underset{\infty}\star v$.

Let $\delta\in V$ be the characteristic function of the $\kk$-point of $\bpo$ corresponding to the trivial bundle with tautological trivializations at $0$ and $\infty$. We define the map $\act:A\to V$ by setting
$$
\act(a)=a\underset{0}\star \delta.
$$
It is easy to see that $\act(a)$ is also equal to $\iota(a)\underset{\infty}(\delta)$ where $\iota$ is defined in subsection
\ref{bundles}. Indeed

\subsection{The space $\calC^{\lam}$}\label{zlam}
Let $\calC^{\lam}$ denote the moduli space of the following data:

(1) A $G$-bundle on $\calF$ on $\PP^1$ of isomorphism class $\lam$.

(2) Trivialization of the fiber of $\calF$ at $0$.

(3) A trivialization of $\calF|_{\PP^1\backslash\{ 0\}}$ which has type $\lam$ near $0$.

We have obvious maps
$$
\begin{CD}
\calC^{\lam} @>>> \bpo^{\lam} \\
@VVV  \\
\calH^{\lam}
\end{CD}
$$
Here the horizontal arrow is obtained by looking at (1),(2) and (3) above and using only the image of (3) at $\infty$, and the vertical arrow is obtained by restricting all of the data to the formal disc near $0$ (and by observing that the trivial bundle is endowed with a canonical trivialization of its fiber at $0$).
Thus in view of the previous sections we get natural maps
$$
\begin{CD}
\calC^{\lam} @>{\beta}>> G/U_{\lam}\underset{M_{\lam}}\times G/U_{\lam} \\
@V{\alpha}VV  \\
G/U_{\lam}\underset{M_{\lam}}\times G/U^-_{\lam}.
\end{CD}
$$
Note that
$$G/U_{\lam}\underset{M_{\lam}}\times G/U_{\lam}=G\times G/(U_{\lam}\times U_{\lam})\cdot \Del M_{\lam}
$$ and
$$
(G/U_{\lam}\times G/U^-_{\lam})/M_{\lam}=G\times G/(U_{\lam}\times U_{\lam}^-)\cdot \Del M_{\lam}.
$$

We now want to describe $\calC^{\lam}$ explicity. First, consider the embedding $P_{\lam}\hookrightarrow G\times G$
which sends $p$ to $(p,i_{\lam}\circ\pi_{\lam}(p))$. Let $P_{\lam}'$ denote the image of $P_{\lam}$ under this embedding.
Note that $P_{\lam}$ is contained in both $(U_{\lam}\times U_{\lam})\cdot \Del M_{\lam}$ and $(U_{\lam}\times U_{\lam}^-)\cdot \Del M_{\lam}$. Note that $P_{\lam}'$ is contained in both $(U_{\lam}\times U_{\lam})\cdot \Del M_{\lam}$ and in $(U_{\lam}\times U_{\lam}^-)\cdot \Del M_{\lam}$.
Thus we get natural maps
$$
\begin{CD}
(G\times G)/P_{\lam}' @>{\delta}>> G/U_{\lam}\underset{M_{\lam}}\times G/U_{\lam} \\
@V{\gamma}VV  \\
(G/U_{\lam}\underset{M_{\lam}}\times G/U^-_{\lam}.
\end{CD}
$$
\begin{theorem}\label{loc-glob-new}
There is a $G\times G$-equivariant isomorphism $\calC^{\lam}\simeq (G\times G)/P_{\lam}'$ under which $\alpha$ goes to $\gamma$ and $\beta$ goes to $\delta$.
\end{theorem}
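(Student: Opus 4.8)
The plan is to realise $\calC^{\lam}$ directly as the homogeneous space $(G\times G)/P_{\lam}'$, by producing a transitive $G\times G$-action whose stabiliser at a suitable point is $P_{\lam}'$ and then identifying the two structure maps by comparing them at that point. The group $G\times G$ acts on $\calC^{\lam}$, one copy changing the trivialisation of the fibre at $0$ in (2) and the other changing the trivialisation of $\calF|_{\PP^1\setminus\{0\}}$ in (3) by a constant automorphism of the trivial bundle; for the left $G\times G$-actions on the two target spaces the maps $\alpha,\beta$ are then manifestly equivariant. Moreover, since $P_{\lam}'$ is the image of $p\mapsto(p,i_{\lam}\pi_{\lam}(p))$, writing $p=u\,i_{\lam}\pi_{\lam}(p)$ with $u\in U_{\lam}$ gives $(p,i_{\lam}\pi_{\lam}(p))=(u,1)\cdot(i_{\lam}\pi_{\lam}(p),i_{\lam}\pi_{\lam}(p))$, so $P_{\lam}'$ is contained in both $(U_{\lam}\times U_{\lam})\cdot\Del M_{\lam}$ and $(U_{\lam}\times U_{\lam}^-)\cdot\Del M_{\lam}$, which is what makes $\gamma$ and $\delta$ well defined. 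Note also that any automorphism of a point of $\calC^{\lam}$ restricts to an automorphism of a trivialised bundle over the dense open $\PP^1\setminus\{0\}$, hence is trivial; so $\calC^{\lam}$ is genuinely a space, just like $(G\times G)/P_{\lam}'$, and it is enough to exhibit a single $G\times G$-orbit.

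\textbf{The distinguished point.} I would take $c_{0}\in\calC^{\lam}$ with $\calF=\calO(\lam)$ presented in the standard way, by gluing the trivial bundle on $\PP^1\setminus\{\infty\}$ to the trivial bundle on $\PP^1\setminus\{0\}$ along $\mG_m$ via a clutching element $t^{\lam}$, with (2) the fibre at $0$ of the first chart and (3) the trivialisation coming from the second chart -- its relative position near $0$ with respect to $\calF$ is $t^{\lam}$, so it does have type $\lam$ near $0$. In these charts $\Aut(\calO(\lam))$ consists of those $h\in G(\kk[t])$ with $t^{-\lam}ht^{\lam}\in G(\kk[t^{-1}])$, and such an $h$ acts as $t^{-\lam}ht^{\lam}$ read through (3) and as $h(0)$ read through (2). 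Hence $(g_1,g_2)$ stabilises $c_0$ precisely when it is realised by some $h$ for which $t^{-\lam}ht^{\lam}$ is a constant $g\in G$; but $h=t^{\lam}gt^{-\lam}\in G(\kk[t])$ forces, via $t^{\lam}u_{\alpha}(x)t^{-\lam}=u_{\alpha}(t^{\langle\lam,\alpha\rangle}x)$, that $g\in P_{\lam}$, and then $h(0)=\pi_{\lam}(g)$. So the stabiliser of $c_0$ is $\{(\pi_{\lam}(g),g):g\in P_{\lam}\}$, which -- after matching which $G$-factor acts at $0$ and which on $\PP^1\setminus\{0\}$ (equivalently, placing the clutching element on the appropriate chart when defining $c_0$) -- is exactly $P_{\lam}'$.

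\textbf{Transitivity and the structure maps.} For transitivity I would use the $G\times G$-equivariant map $\calC^{\lam}\to\calH^{\lam}$ obtained by restricting all data to the formal disc at $0$; it is surjective because every local picture of type $\lam$ extends to a type-$\lam$ bundle on $\PP^1$, and $G\times G$ acts transitively on $\calH^{\lam}$ by the proof of Theorem \ref{local}. It then suffices to see that the stabiliser of the image $\bar c_0$ of $c_0$ acts transitively on the fibre of $\calC^{\lam}\to\calH^{\lam}$ through $c_0$; this fibre parametrises extensions of the Harder-Narasimhan picture at $0$ to a type-$\lam$ bundle on all of $\PP^1$, and under $\calH^{\lam}=(G\times G)/A_{\lam}$ it is a homogeneous space for the unipotent group $A_{\lam}/P_{\lam}'\cong U_{\lam}^-$ on which the stabiliser acts transitively. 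Consequently the orbit map $g\mapsto g\cdot c_0$ descends to a $G\times G$-equivariant isomorphism $(G\times G)/P_{\lam}'\xrightarrow{\ \sim\ }\calC^{\lam}$ with $eP_{\lam}'\mapsto c_0$. Under this isomorphism $\alpha$ corresponds to $\gamma$ and $\beta$ to $\delta$: each of these four maps is read off from the $P_{\lam}$- and $P_{\lam}^-$-structures of the data at $0$ and on $\PP^1\setminus\{0\}$, the two descriptions agree at $c_0\leftrightarrow eP_{\lam}'$ by construction, and $G\times G$-equivariance propagates the agreement over the whole space.

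\textbf{The main difficulty.} I expect the delicate part to be the stabiliser computation together with transitivity, where the hypothesis ``type $\lam$ near $0$'' in (3) has to be used in an essential way: it is precisely this condition that collapses the a priori infinite-dimensional space of trivialisations on $\PP^1\setminus\{0\}$ down to a homogeneous space of the expected dimension, and one must keep careful track of which parabolic, $P_{\lam}$ or $P_{\lam}^-$, is attached to the datum at $0$ as opposed to the datum on $\PP^1\setminus\{0\}$ -- that is, of the sign conventions implicit in the clutching element $t^{\lam}$ and in $\Aut(\calO(\lam))$.
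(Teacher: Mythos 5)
Your overall strategy -- compute the $G\times G$-stabiliser of a distinguished point $c_0\in\calC^{\lam}$, prove transitivity of $G\times G$, then identify $\alpha,\beta$ with $\gamma,\delta$ by equivariance plus agreement at the base point -- is essentially the same orbit-stabiliser argument the paper uses; your explicit clutching-function computation of the stabiliser (including the sign/chart bookkeeping that swaps the two factors into the paper's $P_{\lam}'$) is a clean and correct alternative to the paper's criterion (a)--(c).

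The genuine gap is in the transitivity step, and you have half-acknowledged it yourself. You pass to the equivariant map $\calC^{\lam}\to\calH^{\lam}$, use transitivity on $\calH^{\lam}$ (from Theorem \ref{local}), and then assert that the fibre through $c_0$ ``is a homogeneous space for the unipotent group $A_{\lam}/P_{\lam}'\cong U_{\lam}^-$ on which the stabiliser acts transitively.'' But that assertion \emph{is} the transitivity statement you are trying to prove: knowing the stabiliser of $c_0$ is $P_{\lam}'$ and the stabiliser of $\bar c_0$ maps onto $\bar A_{\lam}$ only tells you the orbit of $c_0$ in the fibre is $\bar A_{\lam}/P_{\lam}'$, not that this orbit is the whole fibre. (There is also a small notational slip: $A_{\lam}\subset G(\OO)$ and $P_{\lam}'\subset G\times G$, so $A_{\lam}/P_{\lam}'$ should read $\bar A_{\lam}/P_{\lam}'$ where $\bar A_{\lam}$ is the image of $A_{\lam}$ in $G\times G$.) The missing input is precisely the geometric fact the paper quotes from \cite{BrFi} (proof of Theorem 2.8): a trivialisation of $\calO(\lam)$ on $\PP^1\setminus\{0\}$ of type $\lam$ near $0$ is unique up to a constant change of trivialisation, i.e.\ the scheme $\overline{\calC}^{\lam}$ of data (1)+(3) is a single $G$-orbit isomorphic to $G/P_{\lam}$. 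This is the ``modular description of transversal slices to $G(\OO)$-orbits in the affine Grassmannian'' and it is exactly what collapses the a priori infinite-dimensional space of trivialisations; without it (or an equivalent argument via Beauville--Laszlo descent plus a vanishing/rigidity statement) the transitivity claim is unproven. Adding a citation to that result, or reproducing its short proof, would close the gap; the rest of your write-up is sound.
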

\begin{proof}
Since $(G\times G)/P_{\lam}'$ is acted on transitively by $G\times G$ in order to construct the above isomorphism it is enough to check the following things:

(a) The two maps $\calC^{\lam}\to G/P_{\lam}$ given by composing either $\alpha$ or $\beta$ with the natural (first) projection to $G/P_{\lam}$ coincide

(b) The fibers of these maps are $G$-torsors with respect to the action of the 2nd copy of $G$ (corresponding to changing the trivialization of $\calF$ on $\PP^1\backslash \{ 0\}$ by a constant map $\PP^1\backslash \{ 0\}\to G$) are $G$-torsors. Moreover, the fiber over 1 is canonically the trivial torsor (i.e. it is equal to $G$)

(c) Any $p\in P_{\lam}$ acts on the fiber over $1\in G/P_{\lam}$ (which is naturally isomorphic to $G$ by the last sentence of (b)) by the right multiplication by $i_{\lam}\pi_{\lam}(p^{-1})$.

First, we claim that $\calC^{\lam}$ is acted on transitively by $G\times G$. Indeed, for this it is enough to check that a trivialization of $\calO(\lam)$ on $\PP^1\backslash\{ 0\}$ of type $\lam$ is unique up to constant change of trivialization.
This is a special case of the modular description of transversal slices in the to $G(\OO)$-orbits in the affine Grassmannian (cf. the proof of Theorem 2.8 in \cite{BrFi}).

Now (a) follows from the fact that the fiber of the Harder-Narasimhan flag of $\calF$ at $0$ is equal to the Jantzen flag
at $0$ w.r.to the trivialization on $\PP^1\backslash \{0\}$. In view of the above transitivity in order to check this it is enough to assume that $\calF=\calO(\lam)$ with the standard trivialization; in this case it is a straightforward calculation.

Let us check (b). Again, it follows from the proof of Theorem 2.8 in \cite{BrFi} that the space $\overline{\calC}^{\lam}$ consisting just of data (1) and (3) from Subsection \ref{zlam}
is isomorphic to $G/P_{\lam}$, where the action of $G$ is given by changing the trivialization by a constant map.
More precisely, the claim is that $G$ acts transitively on such data and given any $(\calF,\kap)\in \overline{\calC}^{\lam}$
the group of all $g\in \Aut(\calF)$ which commute with $\kap$ maps injectively to the group $\Aut(\calF_0)$ (automorphisms of the fiber of $\calF$ at $0$) and its image consists of those automorphisms of $\calF_0$ which preserve the restriction of the Harder-Narasimhan flag of $\calF$ to $0$.  This implies that if we add to $(\calF,\kap)$ a trivialization of $\calF_0$ such that the restriction of the Harder-Narasimhan flag of $\calF$ to $0$ gives a fixed $P_{\lam}$-structure on the corresponding trivial $G$-torsor, then $G$ still acts transitively but with trivial stabilizer. This is precisely the first assertion of (b). For the 2nd assertion we just need to say that we have the bundle $\calO(\lam)$ with a natural trivilization on $\PP^1\backslash \{ 0\}$ and on $\PP^1\backslash \{ z\}$ which gives rise to a point of $\calC^{\lam}$ which sits over $1\in G/P_{\lam}$, which trivializes the corresponding torsor. Part (c) is then straightforward.
\end{proof}

\begin{corollary}
Let $\kk=\FF_q$. Then $\calC^{\lam}$ (viewed as a correspondence) defines an isomorphism between $\CC[\calH^{\lam}(\kk)]$ and
$\CC[\bpo^{\lam}(\kk)]$.
\end{corollary}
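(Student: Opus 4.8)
The plan is to reduce the statement to the invertibility of the standard intertwining operator recalled in Section~\ref{inter} (the theorem of \cite{DD}). By \emph{$\calC^{\lam}$ viewed as a correspondence} I mean the operator $T=\beta_{!}\circ\alpha^{*}\colon\CC[\calH^{\lam}(\kk)]\to\CC[\bpo^{\lam}(\kk)]$, where $\alpha,\beta$ are the two maps of Subsection~\ref{zlam} (so $\alpha$ lands in $\calH^{\lam}$ and $\beta$ in $\bpo^{\lam}$) and $\beta_{!}$ denotes summation over the finite fibres on $\kk$-points. First I would invoke the identifications of $\kk$-points $\CC[\calH^{\lam}(\kk)]\simeq\CC[(G/U_{\lam}\underset{M_{\lam}}\times G/U_{\lam}^-)(\kk)]$ from Theorem~\ref{local} and $\CC[\bpo^{\lam}(\kk)]\simeq\CC[(G/U_{\lam}\underset{M_{\lam}}\times G/U_{\lam})(\kk)]$ from Theorem~\ref{global}, together with Theorem~\ref{loc-glob-new}, which collectively replace $\calC^{\lam}$ with its two maps by $(G\times G)/P_{\lam}'$ with the maps $\gamma$ (to $G/U_{\lam}\underset{M_{\lam}}\times G/U_{\lam}^-$) and $\delta$ (to $G/U_{\lam}\underset{M_{\lam}}\times G/U_{\lam}$). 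Hence it suffices to show that $\delta_{!}\circ\gamma^{*}$ is an isomorphism.

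The core step is to recognise $\delta_{!}\circ\gamma^{*}$ as (a restriction of) the intertwining operator acting in the second variable. Since $P_{\lam}'=\{(um,m):u\in U_{\lam},\,m\in M_{\lam}\}$, one has $(G\times G)/P_{\lam}'=(G/U_{\lam}\times G)/M_{\lam}$ with $M_{\lam}$ acting diagonally on the right (through $P_{\lam}/U_{\lam}$ on $G/U_{\lam}$ and through $i_{\lam}$ on $G$), and under this description $\gamma$ is induced by $\on{id}\times(\text{projection }G\to G/U_{\lam}^-)$ and $\delta$ by $\on{id}\times(\text{projection }G\to G/U_{\lam})$. As $M_{\lam},U_{\lam},U_{\lam}^-$ are connected and $M_{\lam}$ acts freely on $G$, on $G/U_{\lam}$ and on $G/U_{\lam}^-$, Lang's theorem makes each of the three quotient maps surjective on $\kk$-points with each fibre a single $M_{\lam}(\kk)$-orbit, so the three function spaces become the $M_{\lam}(\kk)$-invariants of the obvious tensor products. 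A short computation of the fibre of $\delta$ (over the class of $(g_{1},g_{2})$ it is the set of classes of $(g_{1},g_{2}u)$ with $u\in U_{\lam}$) together with the formula for $\gamma$ on it then gives $(\delta_{!}\gamma^{*}f)\big(\overline{(g_{1}U_{\lam},g_{2}U_{\lam})}\big)=\sum_{u\in U_{\lam}(\kk)}f\big(\overline{(g_{1}U_{\lam},g_{2}uU_{\lam}^-)}\big)$, i.e.\ $\delta_{!}\circ\gamma^{*}$ is $\on{id}_{F(G/U_{\lam}(\kk))}\otimes\Phi_{P_{\lam}^-,P_{\lam}}$ acting in the second factor, restricted to $M_{\lam}(\kk)$-invariants, where $\Phi_{P_{\lam}^-,P_{\lam}}\colon F(G/U_{\lam}^-(\kk))\to F(G/U_{\lam}(\kk))$ is the intertwining operator of Section~\ref{inter}.

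To finish: $\Phi_{P_{\lam}^-,P_{\lam}}$ is an isomorphism by \cite{DD}, hence so is $\on{id}\otimes\Phi_{P_{\lam}^-,P_{\lam}}$; and since $\Phi_{P_{\lam}^-,P_{\lam}}$ commutes with the $G(\kk)\times M_{\lam}(\kk)$-action, this operator is equivariant for the diagonal right $M_{\lam}(\kk)$-action and therefore restricts to an isomorphism on $M_{\lam}(\kk)$-invariants; thus $T=\delta_{!}\circ\gamma^{*}$ is an isomorphism. I expect the main obstacle to be the middle step: matching the pull--push along the correspondence with the intertwining operator requires careful bookkeeping of the diagonal $M_{\lam}$-quotients (the $\underset{M_{\lam}}\times$'s) and of the fibres of $\gamma$ and $\delta$. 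None of this is deep once Theorems~\ref{local},~\ref{global} and~\ref{loc-glob-new} are in hand; the one genuinely substantial ingredient, the invertibility of $\Phi_{P_{\lam}^-,P_{\lam}}$, is imported from \cite{DD}.
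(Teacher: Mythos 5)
Your proof is correct and follows the route the paper has in mind: reduce via Theorems~\ref{local}, \ref{global} and \ref{loc-glob-new} to the intertwining operator acting in the second factor, then invoke the invertibility result of \cite{DD} (cf.\ Step~4 of Subsection~\ref{sketch}). The paper leaves the identification of $\delta_{!}\circ\gamma^{*}$ with $\on{id}\otimes\Phi_{P_{\lam}^-,P_{\lam}}$ on $M_{\lam}(\kk)$-invariants implicit; your bookkeeping of the diagonal $M_{\lam}$-quotient and of the fibres, together with the Lang-theorem step identifying $\kk$-points of the quotients, fills that gap correctly.
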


\section{Proof of Theorems \ref{cent-div} and \ref{lift-gln}}

%\subsection{Satake isomorphism and global Langlands correspondence}

\subsection{Proof of Theorem \ref{cent-div}}
First of all, it is clear that assertion (i) of Theorem \ref{cent-div} follows from assertion (ii).
Also it is enough to prove (ii) in the case $F=\qlb$ (here $\ell$ is prime to the characteristic of $\kk$).
In this case (ii) follows from the (weak form of the) global Langlands correspondence for functional fields proved by L.~Lafforgue.
Namely, the following statement is contained in \cite{Laf}.

Let $\pi=\otimes ' \pi_\nu$ be an irreducible automorphic representation $G$ for the global field $E=\kk(\PP^1)$ (here $\nu$ runs over places of $E$) such that

a) $\pi_\nu$ is unramfied by $\nu\neq 0,\infty$ (i.e. $\pi_\nu^{G(\OO_{E_\nu})}\neq 0$ for $\nu\neq 0,\infty$);

b) $\pi_\nu^{K_1,\nu}\neq 0$ for $\nu=0,\infty$ where $K_{1,\nu}$ denotes the 1st congruence subgroup of $G(E_\nu)$.

\noindent
Let $\pi_1(\PP^1)_{0,\infty}$ denote the fundamental group of $\PP^1\backslash\{ 0,\infty\}$ tamely ramified at $0$ and $\infty$. Then there exists a homomorphism $\rho:\pi_1(\PP^1)_{0,\infty}\to G^{\vee}(\qlb)$ such that the Satake parameter
of $\pi_\nu$ for $\nu\neq 0,\infty$ (which a priori is a semi-simple conjugacy class in $G^{\vee}(\qlb)$) is equal to $\rho(\Fr_\nu)$ (where $\Fr_\nu$ is the Frobenius element at the place $\nu$ which is well defined as an element of $\pi_1(\PP^1)_{0,\infty}$ up to conjugacy).

The group $\Gam=\pi_1(\PP^1)_{0,\infty}$ is isomorphic to the semidirect product of $\widehat{\ZZ}$ (with generator $\Fr$, corresponding to the Frobenius element at the point $1\in \kk^{\x}$) and
the group $\Gam_0=\underset{\leftarrow}\lim \ \kk_i^{\x}$ (this is the corresponding geometric fundamental group), such that $\Fr\gam \Fr^{-1}=\gam^q$ for $\gam\in \Gam_0$. Thus a homomorphism $\Gam\to G^{\vee}(\qlb)$ is given by a pair $(s,u)$ as in Subsection \ref{setup}. The proof then follows from the following two observations:

1) If $x\in \kk_i^{\x}$ and $x$ does not belong to $\kk_j$ for any $j<i$ then $\rho(\Fr_x)$ is conjugate to $s^i u(y)$ where
$y$ as in Theorem \ref{cent-div}.

2) Every irreducible representation of $A$  does appear as $\pi_0$ for some $\pi$ as above (this follows immediately from
Theorem \ref{loc-glob}).

\subsection{Proof of Lemma \ref{lem-cusp}}
Assume that $G$ is semi-simple. Then it is clear that $P_{\lam}\neq G$ for any $\lam\neq 0$.
Hence Lemma \ref{lem-cusp} follows from (\ref{formula-global}).

\subsection{Proof of Theorem \ref{lift-gln}(i)}
For the remainder of this Section we set $G=\PGL(N)$ and we use freely the notation from Subsection \ref{cusp-int}.
The set of connected components of the stack $\Bun_G(\PP^1)$ (and thus of $\bpo$) is naturally identified $\ZZ/N\ZZ$.
Thus the space $V$ acquires a natural $\ZZ/N\ZZ$ grading $V=\oplus_{d\in \ZZ/N\ZZ} V_d$. We claim that $(V_d)_{\cusp}=0$
unless $d=0$ -- this again follows immediately from (\ref{formula-global}). On the other hand, the operator $h_{D,f}$ maps
$V_d$ to $V_{d+\overline{i}}$ where $\overline{i}$ is the image of $i$ in $\ZZ/N\ZZ$, which shows that $h_{D,f}$ acts by $0$
on $V_{\cusp}$ unless $\overline{i}=0$.

\subsection{Proof of Theorem \ref{lift-gln}(ii)}
Here we give a direct geometric proof of Theorem \ref{lift-gln}(ii). In the next subsection we shall give a proof of
Theorem \ref{lift-gln}(iii) which contains Theorem \ref{lift-gln}(ii) as a special case -- but that proof will be indirect (it will use Theorem \ref{cent-div} (and thus it will rely on our knowledge of the global Langlands correspondence) and the formula for the character of Deligne-Lusztig representation from \cite{DL}).

First of all, let us recall the precise definition of the operator $h_{D,f}$ in our case. Let $D$ for the moment be any subset of $\overline{\kk}^{\x}$ invariant under the Galois group of $\overline{\kk}$ over $\kk$.
Let us first work with $G=\GL(N)$ rather than $\PGL(N)$. Then we can consider triples $(\widetilde{\calF_1},\widetilde{\calF_2},\widetilde{\kap})$ where
$\widetilde{\calF_i}$ is a locally free sheaf on $\PP^1$ of rank $N$ trivialized at $0$ and $\infty$ and $\widetilde{\kap}:\widetilde{\calF_1}\to \widetilde{\calF_2}$ is an injective map such that
$\widetilde{\calF_2}/\widetilde{\calF_1}=\calO_D$ (the structure sheaf of $D$). For $\PGL(N)$ we consider the quotient of this stack by
$\Pic(\PP^1)$ acting diagonally. We denote the resulting stack by $\calH_{D,N}$ and we denote its typical point by $(\calF_1,\calF_2,\kap)$ (here each $\calF_i$ is a $\PGL(N)$-bundle on $\PP^1$ trivialized at $0$ and $\infty$ and $\kap$ is an isomorphism between $\calF_1$ and $\calF_2$ away from $D$ (the isomorphism is not required to respect the trivializations at $0$ and $\infty$)). It can be viewed as a correspondence
from $\bpo$ to $\bpo$. We also denote by $\calH_{D,N}^0$ the locus where both $\calF_i$ are trivial. This is a scheme (rather than a stack). It can be viewed as a correspondence from $G$ to $G$.

Then $h_{D,f}$ is the operator induced by the above correspondence multiplied by $(-q^{-1/2})^{(N-1)\deg(D)}$.
We now specialize to the case when $D$ is the Frobenius orbit of some $x\in \kk_i^{\x}$ (which doesn't belong to any $\kk_j$ with $j<i$). In this case $\deg(D)=i$. In view of the previous subsection we only care about the case when $i$ is divisible by $N$. In this case the number $i(N-1)$ is even and hence
$$
(-q^{-1/2})^{(N-1)\deg(D)}=q^{-\frac{iN}{2}}.
$$

To prove Theorem \ref{lift-gln}(ii) we need the following
\begin{lemma}\label{gln-orbit}
Let $D$ be as above. Assume in addition that $i=N$. Then $x$ defines a conjugacy class in $G=\PGL(N)$; we shall denote this class by
 $\Ome_x\subset G$.
Then as a correspondence from $G$ to $G$ the scheme $\calH_{D,N}$ is isomorphic to the subset of $G\times G$ consisting of elements $(g_1,g_2)$ such that $g_1g_2^{-1}\in \Ome_x$.
\end{lemma}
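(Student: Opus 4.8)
The plan is to make the isomorphism completely explicit by unwinding what a $\kk$-point of $\calH_{D,N}^0$ is. Recall that $D=\{x,x^q,\dots,x^{q^{N-1}}\}$ is the Frobenius orbit of $x\in\kk_N^\times$, so $\calO_D$ is a length-$N$ skyscraper sheaf supported at the closed point of $\PP^1$ corresponding to $D$, and $\dim_\kk H^0(\PP^1,\calO_D)=N$. A $\kk$-point of $\calH_{D,N}^0$ (before passing to the $\PGL(N)$-quotient) is then an injection $\widetilde\kap\colon\widetilde\calF_1\hookrightarrow\widetilde\calF_2$ of rank-$N$ vector bundles on $\PP^1$, both trivialized at $0$ and $\infty$, with cokernel $\calO_D$. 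Since both bundles are trivial (this is the locus $\calH_{D,N}^0$), I would first note that the trivializations at $0$ and $\infty$ together with triviality force $\widetilde\calF_1\cong\widetilde\calF_2\cong\calO^N$ \emph{canonically} as trivialized bundles, so the datum reduces to a single injective map of $\calO$-modules $\calO^N\to\calO^N$, i.e. an element of $\operatorname{Mat}_N(\kk[u])$ (working in the affine chart containing $D$ with coordinate $u$, where $0,\infty\notin D$), whose determinant, up to a scalar, is the minimal polynomial $m_x(u)\in\kk[u]$ of $x$ over $\kk$ (which has degree $N$ precisely because $x$ generates $\kk_N$).

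Next I would identify this space of matrices. The classical fact here is that $N\times N$ matrices $M(u)$ over $\kk[u]$ with $\det M(u)=c\cdot m_x(u)$, modulo left and right multiplication by constant invertible matrices, are classified by the $\GL(N,\kk)$-conjugacy class of the companion-type endomorphism they define on the cokernel; since $\calO_D\cong\kk[u]/(m_x(u))$ and $m_x$ is irreducible of degree $N$, multiplication by $u$ on this cokernel is a single regular semisimple conjugacy class whose characteristic polynomial is $m_x$ — this is exactly the class $\Ome_x$ (viewing $x\in\kk_N^\times$ as an element of $\GL(N,\kk)$ via the regular representation of $\kk_N$ over $\kk$, then projecting to $\PGL(N)$). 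Concretely, I would show that given such an $M(u)$, the Smith normal form / "elementary divisor" argument over the PID $\kk[u]$ lets one write $M(u)=A(u-\Lambda)B$ for constant matrices $A,B\in\GL(N,\kk)$ and $\Lambda$ a fixed matrix in $\Ome_x$ (lifted to $\GL(N)$), and that $A,B$ are unique once $\Lambda$ is fixed. The pair $(g_1,g_2)$ assigned to this point of $\calH_{D,N}^0$ is obtained from the two trivializations of $\widetilde\calF_1$ and $\widetilde\calF_2$ relative to the reference frame $\calO^N$, and a direct bookkeeping shows $g_1g_2^{-1}$ (or $g_2^{-1}g_1$, depending on convention) equals the image of $\Lambda$ in $\PGL(N)$ — i.e. lies in $\Ome_x$ — while $A,B$ account exactly for the residual $\GL(N)\times\GL(N)$ (hence $\PGL(N)$ after the $\Pic$-quotient) freedom of the trivializations. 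Conversely, any $(g_1,g_2)$ with $g_1g_2^{-1}\in\Ome_x$ gives back such a map $M(u)$, and the two constructions are mutually inverse and $G\times G$-equivariant.

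The main obstacle I anticipate is the careful reconciliation of conventions and of the $\Pic(\PP^1)$-quotient: one has to check that after quotienting the $\GL(N)$-picture by $\Pic(\PP^1)$ (acting by twisting both $\widetilde\calF_i$ simultaneously) one genuinely lands in $\PGL(N)$-bundles with the cokernel still $\calO_D$, that the scalar ambiguity in $\det M(u)=c\cdot m_x(u)$ is precisely absorbed, and that the resulting map to $G\times G$ lands in the claimed subset \emph{and} is a bijection on $\kk$-points rather than merely a surjection with finite fibers. The reduction to $\GL(N)$ and the Smith-normal-form classification of the matrices $M(u)$ are routine once set up; the delicate point is purely the tracking of which copy of $G$ acts by left versus right translation and confirming that $\Ome_x$ (and not its inverse or a twist) is what appears. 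I would handle this by computing the single model point $\widetilde\calF_i=\calO^N$, $M(u)=u-\Lambda$ explicitly, reading off $(g_1,g_2)=(1,1)$ there, and then propagating by the transitive $G\times G$-action.
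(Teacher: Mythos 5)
Your overall strategy (reduce to $\GL(N)$, describe the modification by an explicit polynomial matrix, identify the relevant conjugacy class as multiplication by the coordinate on the cokernel $\calO_D\cong\kk[u]/(m_x)\cong\kk_N$, and fix conventions by evaluating one model point and propagating by $G\times G$-equivariance) is essentially the paper's, but there is a genuine gap in the middle. First, the two bundles cannot both be $\calO^N$: since $\widetilde\calF_2/\widetilde\calF_1\cong\calO_D$ has length $N$, one has $\deg\widetilde\calF_2=\deg\widetilde\calF_1+N$, so the correct model (after using the diagonal $\Pic(\PP^1)$-freedom) is an injection $\calO^N\hookrightarrow\calO(1)^N$; indeed, an injective endomorphism of $\calO^N$ over all of $\PP^1$ is a constant invertible matrix and has no cokernel at all. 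Second, and more seriously, the claim that Smith normal form over $\kk[u]$ writes every $M(u)\in\operatorname{Mat}_N(\kk[u])$ with $\det M(u)=c\,m_x(u)$ as $A(uI-\Lambda)B$ with \emph{constant} $A,B\in\GL(N,\kk)$ is false: Smith normal form only produces unimodular polynomial matrices on the two sides, and for instance (for $N=2$) the matrix $\left(\begin{smallmatrix}1&u^{5}\\ 0&m_x(u)\end{smallmatrix}\right)$ is injective with cokernel $\kk[u]/(m_x)$ yet cannot equal $A(uI-\Lambda)B$, whose entries have degree at most $1$. Accordingly, the purely affine moduli problem you reduce to is much larger than $\{(g_1,g_2):g_1g_2^{-1}\in\Ome_x\}$, and the asserted ``classical fact'' about classification up to constant left/right multiplication does not hold in that generality.

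What rescues the lemma is precisely the condition at $\infty$ that gets lost when you pass to ``the affine chart containing $D$'': the map must extend over $\infty$ to a map of bundles $\calO^N\to\calO(1)^N$ whose cokernel is supported on $D$, which forces $M(u)=M_1u+M_0$ to be a \emph{linear} pencil with $M_1$ invertible. For such regular pencils the factorization $M(u)=A(uI-\Lambda)B$ with constant $A,B$ does hold (Weierstrass theory of regular matrix pencils), and the two group elements are read off as $g_1=M(0)=M_0$ and $g_2=M_1=(M(u)/u)(\infty)$, so that on the model point $\operatorname{diag}(u-a_1,\dots,u-a_N)$ one gets $g_1g_2^{-1}=\operatorname{diag}(-a_1,\dots,-a_N)$, which is harmless in $\PGL(N)$. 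The paper reaches the same conclusion more geometrically: it parametrizes subsheaves $\calE\subset\calO(1)^N$ with quotient $\bigoplus\calO_{a_i}$ by $(\PP^{N-1})^N$ over $\overline\kk$, identifies the locus where $\calE$ is trivial with the complement of the diagonals, on which $\GL(N)$ acts transitively, and then evaluates the diagonal representative. If you impose the regularity-at-$\infty$ condition and replace ``Smith normal form'' by the classification of regular linear pencils (or by the paper's transitivity argument), your proof goes through.
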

\begin{proof}
First, let us formulate the analogous statement over $\overline{\kk}$ and also replace $\PGL(N)$ by $\GL(N)$. Then we geet the following statement. Let $(a_1,\cdots,a_N)\in \overline{\kk}^{\x}$
be such that $a_j\neq a_j$ for $i\neq j$. Let $\Ome_{a_1,\cdots,a_N}\subset \GL(N,\kk)$ be the orbits of the diagonal
matrix $(-a_1,\cdots,-a_N)$ in the group $\GL(N,\kk)$. Let $Z$ denote the space of injective maps $\phi:\calO^N\to \calO(1)^N$ such that
$$
\calO(1)^N/\phi(\calO^N)=\bigoplus\limits_{i=0}^N \calO_{a_i}.
$$
We can think about $\phi$ as an $N\times N$ matrix with entries in $\kk[t]$ which is invertible away from $a_1,\cdots,a_N$ and such that
$\phi(t)/t$ is regular at $\infty$ and its value at $\infty$ is invertible.
Then $Z$ is isomorphic to the subset of $\GL(N)\times \GL(N)$ consisting of pairs $(g_1,g_2)$ such that $g_1g_2^{-1}\in \Ome_{a_1,\cdots,a_N}$, where the map to $G\times G$ is given by $(\phi(0),\frac{\phi(t)}{t}(\infty))$.
The proof is as follows. The space of subsheaves $\calE\subset \calO(1)^N$ such that $\calO(1)^N/\calE=\bigoplus\limits_{i=0}^N \calO_{a_i}$ is naturally isomorphic to $(\PP^{N-1})^N$. The open subset corresponding to the condition that $\calE$ is trivial is equal to the complement to all diagonals. Thus the group $\GL(N)$ acts transitively on it (the action is by automorphisms of $\calO(1)^N$). Thus it follows that if in addition we choose a trivialization of $\calE$ (which is what we need to get precisely the scheme $Z$) we get a scheme isomorphic to $\GL(N)\times \Ome$ where $\Ome$ is some adjoint orbit in $\GL(N)$. It remains to remark that the diagonal matrix $\phi(t)=(t-a_1,\cdots,t-a_N)$ lies in $Z$ and in this case
$\phi(0)=(-a_1,\cdots,-a_N)$  and $\frac{\phi(t)}{t}(\infty)$ is the identity matrix.

\end{proof}
Let finish the proof of Theorem \ref{lift-gln}(ii). If $\pi$ is an irreducible representation of $G(\kk)$ then the element
$\sum\limits_{g\in \Ome_x} g\in F[G(\kk)]$ acts in $\pi$ by by a scalar $c(\pi,x)$. Lemma \ref{gln-orbit} implies that if $\pi$ is cuspidal  then $\eta_{D,f}(\pi)=q^{-\frac{N(N-1)}{2}}c(\pi,x)$. On the other hand, it is clear (by computing the trace of $\sum_{g\in \Ome_x} g$ in $\pi$ that
$$
|\Ome_x(\kk)|\chi_\pi(x)=\dim(\pi)\cdot c(\pi,x).
$$
But it is easy to see that
$$
|\Ome_x(\kk)|=|\GL(N,\kk)|/|\kk_N^{\x}|=\prod\limits_{j=1}^{N-1} (q^N-q^j)
$$
On the other hand, it follows from \cite{DL} that if $\pi$ is cuspidal and irreducible then
$$
\dim(\pi)=\prod\limits_{j=1}^{N-1} (q^j-1)=q^{-\frac{N(N-1)}{2}}\prod\limits_{j=1}^{N-1} (q^N-q^j)=q^{-\frac{N(N-1)}{2}}|\Ome_x(\kk)|.
$$
Hence
$$
\eta_{D,f}(\pi)=q^{-\frac{N(N-1)}{2}}c(\pi,x)=\eta_{D,f}(\pi)=q^{-\frac{N(N-1)}{2}}\chi_\pi(x)\frac{|\Ome_x(\kk)|}{\dim(\pi)}=
\chi_\pi(x)
$$
which finishes the proof.

\subsection{Proof of Theorem \ref{lift-gln}(iii)}
Assume first that $x\in \kk_N^{\x}$ and let $[\pi]=(-1)^{N-1}R(T,\theta)$ where $T(\kk)=\kk_N^{\x}/\kk^{\x}$ and
$\theta:T(\kk)\to F^{\x}$ is a character such that $\theta^{q^j}\neq \theta$ for all $0<j<N$ (the latter condition is
equivalent to cuspidality of $\pi$)\footnote{The sign $(-1)^{N-1}$ is needed in order to turn $R(T,\theta)$ into actual representation (as opposed to a virtual representation)}. Let us view $\pi$ as a representation of $G(\OO)$ via the evaluation map
$G(\OO)\to G(\kk)$. Then $\ind_{G(\OO)}^{G(\KK)}(\pi)$ (here ind stands for compact induction) is irreducible and thus
$(\ind_{G(\OO)}^{G(\KK)}(\pi))^{K_1}$ is an irreducible (obviously non-zero) representation of $A$. The corresponding
parameters $(s,u)$ are described as follows.

\begin{lemma} Under the above conditions $u: \Gam_0=\underset{\leftarrow}\lim \ \kk_i^{\x}\to G^{\vee}(F)= \SL(N,F)$ factorizes through $\kk_N^{\x}$
and it is equal to the diagonal matrix $(-1)^{N-1}(\theta,\theta^q,\theta^{q^2},\cdots,\theta^{q^{N-1}})$. Also
$s$ is given by the cyclic permutation $(1\to 2\to \cdots\to N\to 1)$.
\end{lemma}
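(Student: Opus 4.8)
The plan is to identify $\sigma:=\ind_{G(\OO)}^{G(\KK)}(\pi)$ with a depth‑zero supercuspidal representation of $\PGL(N,\KK)$ and to compute its Langlands parameter: by Theorem~\ref{cent-div} the pair $(s,u)\in Z_{G^\vee,\kk}$ attached to the irreducible $A$‑module $\sigma^{K_1}$ is exactly the (tame) Langlands parameter of $\sigma$, so the lemma is a computation in the local Langlands correspondence for $\PGL(N)$ over $\KK=\kk((t))$. I would begin by setting up the $\GL(N)$ picture. The representation $\sigma$ is the inflation along $\GL(N,\KK)\twoheadrightarrow\PGL(N,\KK)$ of $\widetilde\sigma:=\ind_{\GL(N,\OO)\cdot t^{\ZZ}}^{\GL(N,\KK)}(\widetilde\pi)$, where $\widetilde\pi$ restricts on $\GL(N,\OO)$ to the inflation of $R(\widetilde T,\widetilde\theta)$ — here $\widetilde T(\kk)=\kk_N^{\times}$ is the elliptic torus of $\GL(N)$ and $\widetilde\theta$ is the pull‑back of $\theta$, trivial on $\kk^{\times}\subset\kk_N^{\times}$ — and $t$ acts trivially. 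By the classical theory of depth‑zero supercuspidals for $\GL(N)$, $\widetilde\sigma$ is irreducible and supercuspidal; hence so is $\sigma$, $\sigma^{K_1}$ is a well‑defined irreducible $A$‑module, and $(s,u)$ is defined (we may assume $F=\qlb$).

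Next I would apply the local Langlands correspondence for $\GL(N)$ over $\KK$ (Laumon–Rapoport–Stuhler; or, since $\widetilde\sigma$ is tame, the essentially tame correspondence of Bushnell–Henniart): $\widetilde\sigma$ corresponds to $\rho=\operatorname{Ind}_{W_{\KK_N}}^{W_{\KK}}(\chi)$, where $\KK_N=\kk_N((t))$ is the unramified degree‑$N$ extension and $\chi$ is the character of $W_{\KK_N}^{\mathrm{ab}}=\widehat{\KK_N^{\times}}$ attached to $\widetilde\theta$ by local class field theory, modified on the uniformizer by the depth‑zero \emph{rectifier}. Since $\widetilde\sigma$ has trivial central character, $\det\rho$ is trivial, so $\rho$ takes values in $\SL(N,F)=\PGL(N)^{\vee}(F)$ and is the parameter of $\sigma$. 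Restricting $\rho$ to the tame quotient $\Gam=\widehat{\ZZ}\ltimes\Gam_0$ of $W_\KK$ loses no information, and the induction formula, applied with the transversal $\{1,\Fr,\dots,\Fr^{N-1}\}$ and the relation $\Fr\gamma\Fr^{-1}=\gamma^q$ on $\Gam_0$, shows that $\rho(\gamma)$ acts on the line $\Fr^j v$ by $\theta(\gamma)^{q^{-j}}$; since $\theta$ factors through $\kk_N^{\times}$ these eigenvalues range over the Frobenius orbit of $\theta$, so
\[
u=\rho|_{\Gam_0}\ \text{is conjugate to}\ \operatorname{diag}\bigl(\theta,\theta^{q},\dots,\theta^{q^{N-1}}\bigr),
\]
while $\rho(\Fr)$ cyclically permutes the lines $\Fr^j v$ with the single scalar $\chi(\Fr^N)$ on the wrap‑around, so $s=\rho(\Fr)$ is a scalar multiple of the cyclic permutation $(1\to 2\to\cdots\to N\to 1)$.

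Then I would pin down the scalars using $\det\rho\equiv 1$. On $\Gam_0$ this forces $\theta^{1+q+\cdots+q^{N-1}}=\theta\circ\Norm_{N,1}=1$, equivalently $\theta|_{\kk^{\times}}=1$ — exactly the condition under which $R(T,\theta)$ descends to $\PGL(N)$, so that no further diagonal twist of $u$ is possible — and on $\Fr$ it forces $\chi(\Fr^N)=(-1)^{N-1}$ (so that $\det\rho(\Fr)=(-1)^{N-1}\chi(\Fr^N)=1$), which is precisely the value of the depth‑zero rectifier on the uniformizer and accounts for the sign $(-1)^{N-1}$ in the statement. Choosing the base point in the Frobenius orbit of $\theta$ and relabelling the coordinates then identifies the pair $(s,u)$ with the one in the lemma.

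I expect the last step, together with keeping all normalizations consistent, to be the main obstacle: getting the rectifier sign right, and matching the identification $\PGL(N)^{\vee}=\SL(N)$ and the geometric‑versus‑arithmetic normalization of $\Fr$ with the conventions under which $(s,u)$ was attached to $\sigma^{K_1}$ in Theorem~\ref{cent-div} (via L.~Lafforgue's correspondence and the Satake parameters at the good places). A clean way to fix all of this at once, avoiding any bookkeeping of rectifiers, is to substitute the computed pair $(s,u)$ into formula~\eqref{formula} of Theorem~\ref{cent-div}(ii) with $f=\Tr$ and check that it reproduces $\chi_\pi(x)$ as established geometrically in Theorem~\ref{lift-gln}(ii); this consistency check calibrates the sign and completes the proof.
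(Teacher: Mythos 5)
The paper does not actually prove this lemma: it asserts that the result ``easily follows from a suitable generalization of the arguments from the proof of Theorem~2.2.8 in \cite{BG-Eis} to the ramified setting'' and explicitly postpones that generalization to a future publication. Your proposal is therefore not a reproduction of the paper's argument but a genuinely different route: you bypass geometric Eisenstein series entirely and compute the parameter of the depth-zero supercuspidal $\ind_{G(\OO)}^{G(\KK)}(\pi)$ directly through the local Langlands correspondence for $\GL(N)$ (Laumon--Rapoport--Stuhler / Bushnell--Henniart essentially tame correspondence), writing it as $\Ind_{W_{\KK_N}}^{W_\KK}(\chi)$ and then restricting to the tame quotient $\widehat{\ZZ}\ltimes\Gam_0$. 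The Mackey-theoretic computation of $u=\rho|_{\Gam_0}$ (eigenvalues $\theta^{q^{-j}}=\theta^{q^{N-j}}$, giving the orbit $\{\theta,\theta^q,\dots,\theta^{q^{N-1}}\}$) and of $s=\rho(\Fr)$ (cyclic shift with $\chi(\Fr^N)$ in the corner, det forced to be $1$ by the $\PGL$-condition) is correct, and your idea of calibrating the remaining scalar (the rectifier) against the independently and geometrically proved Theorem~\ref{lift-gln}(ii) is sound and is not circular, since (ii) has its own direct proof in the paper.

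Two points you should make explicit for this to be a complete argument. First, you are silently using that the pair $(s,u)$ the paper attaches to the $A$-module $\sigma^{K_1}$ --- defined via Genestier--Lafforgue's excursion-operator construction and, in the proof of Theorem~\ref{cent-div}, identified using L.~Lafforgue's global correspondence --- agrees with the restriction to the tame quotient of the LLC parameter of $\sigma$. For $\GL(N)$ this is indeed known (Genestier--Lafforgue prove compatibility of their construction with classical LLC, and local--global compatibility for L.~Lafforgue's correspondence at tame places is available via LRS), but it is a nontrivial input that should be cited rather than assumed. Second, note that your normalization puts the sign $(-1)^{N-1}$ into $s$ (so that $s^N=(-1)^{N-1}\cdot\mathrm{Id}$ rather than $s^N=1$), whereas the lemma as stated carries it on $u$ and the subsequent text asserts $s^N=1$; the two conventions give the same trace $\Tr(s^N u(x))=(-1)^{N-1}\sum_j\theta(x)^{q^j}$ and hence the same value in~\eqref{formula}, but your convention is actually the one compatible with $s\in\SL(N,F)$ for even $N$ (the bare $N$-cycle has determinant $(-1)^{N-1}$), so it is worth flagging the discrepancy rather than forcing agreement. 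Comparatively: the paper's intended argument is purely geometric and would presumably extend uniformly to other reductive groups, while your argument is shorter and self-contained for $\GL(N)$/$\PGL(N)$ but leans on the heavy LLC compatibility machinery specific to type~$A$.
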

The proof easily follows from the suitable generalization of \cite{BG-Eis} (specifically, the arguments from the proof  Theorem 2.2.8. in {\em loc. cit.}) to the ramified setting. This is relatively straightforward, but we postpone it for another publication.

 So, $s^N=1$ and the RHS of (\ref{formula})
is equal to
$$
(-1)^{N-1}\sum\limits_{i=0}^{N-1}\theta(x)^{q^i}.
$$
This is precisely the value of the character of $(-1)^{N-1}R(T,\theta)$ at $x$.

We now proceed to the case of general $x$; i.e. we assume that $x\in \kk_i^{\x}$ with $i=aN$ and that $x$ does not belong
to any $\kk_j$ with $j<i$. Then again $s^i=1$ and since $\kk_N\subset \kk_i$ the RHS of (\ref{formula}) becomes just
equal to
$$
(-1)^{N-1}\sum\limits_{i=0}^{N-1}\theta(\Norm_{i,N}(x))^{q^i},
$$
which is again the character of the above representation $\pi$.

\end{document}